\newtheorem{theorem}{Theorem}[section]
\newtheorem{lemma}[theorem]{Lemma}
\newtheorem{cor}[theorem]{Corollary}
\newtheorem{proposition}[theorem]{Proposition}
\numberwithin{equation}{section}
\newcommand{\G}{\mathcal{G}}
\newcommand{\U}{\mathcal{U}}
\def\Vightarrow#1{\smash{\mathop{\longrightarrow}\limits^{#1}}}
\begin{document}

\title{Subgroups and homology of extensions of centralizers of pro-p groups}

\author{D. H. Kochloukova and P.A. Zalesskii}

\thanks{Partially supported by
FAPESP, Brazil.}

\maketitle

\section{Introduction}

Limit groups have been studied extensively over the last ten years
and they played a crucial role in the solution of the Tarski
problem. The name \emph{limit group} was introduced by Sela. There
are different equivalent definitions for these groups. The class
of limit groups coincides with the class of fully residually free
groups; under this name they were studied by Remeslennikov,
Kharlampovich and Myasnikov. One can also define limit groups in a
constructive manner as finitely generated subgroups of groups
obtained from free groups of finite rank by finitely many
extensions of centralizers. Starting from this definition,  a
special class $\mathcal{L}$ of pro-$p$ groups  (pro-$p$ analogues
of limit groups) was introduced by the authors in \cite{KZ}. The class
$\mathcal{L}$ consists of all finitely generated subgroups of
pro-$p$ groups obtained from free pro-$p$ groups of finite rank by
finitely many extensions of centralizers (see Subsection
\ref{sectionL},  for details). In \cite{KZ} it was shown that many
properties that hold for limit groups are also satisfied by the
pro-$p$ groups from the class $\mathcal{L}$ : the groups $G$ of the
class $\mathcal L$ have finite cohomological dimension, are of
type $FP_{\infty}$, have non-positive Euler characeristic, if $C$
is maximal procyclic subgroup then its centralizer and normalizer
coincide, every finitely generated normal subgroup has finite
index, every soluble $G$ is abelian, every two generated $G$ is
free or abelian. All these properties are known to hold for
abstract limit groups but the similarity might end there as it is
not known whether the groups from the class $\mathcal L$ are fully
residually free pro-$p$ or whether the pro-$p$ completion $D_d$ of
an orientable surface group of odd  genus $d$ and $p \not= 2$ is
in $\mathcal L$. Still it was shown by the authors in \cite{KZ2} that for odd $d$
the pro-$p$ group $D_d$ is in the class $\mathcal L$ and $D_d$ is
fully residually free.

The study of the groups of the class $\mathcal L$ was continued by Snopche, Zalesskii in
\cite{PavelIlir} where it was shown that a group from the class
$\mathcal L$ has Euler characteristic $\chi(G) = 0$ if and only if
$G$ is abelian. A profinite version of the class $\mathcal L$ was
considered by Zapata in his PhD thesis \cite{Theo}.

In the present paper we study asymptotic behavior of the
dimensions of the homology groups of subgroups of finite index in
$G \in \mathcal L$. In the case of abstract limit groups this was
recently studied by Bridson, Kochloukova in \cite{BK} where the case $\cap_i U_i = 1$ and
each $U_i$ normal in $G$  was considered. In the case of abstract
groups this is important as permits the calculation of the
analytic Betti numbers of abstract limit groups. In the case of
pro-$p$ groups little is known about such limits, if dimension one
homologies are used it is sometimes referred to as the rank
gradient of a pro-$p$ group.

\medskip
{\bf Theorem A} {\it Let $G \in {\mathcal L}$ and $\{ U_i \}_{i
\geq 1}$ be a sequence of open subgroups of $G$  such that
$U_{i+1}  \leq U_i$ for all $i$ and $cd(\cap_i U_i) \leq 2$. Then
\begin{itemize}
\item[(i)]
$$
\lim_{\overrightarrow{~i~}} \dim H_j(U_i, \mathbb{F}_p) / [G :
U_i] = 0 \hbox{ for }j \geq 3;$$

\item[(ii)]  if $[G : U_i]$ tends to infinity  and def$(U_i)$
denotes the deficiency we have
$$
\lim_{\overrightarrow{~i~}} def(U_i) / [G : U_i] = - \chi(G);$$
\item[(iii)] if $[G : U_i]$ tends to infinity and  $(\cap_i U_i) =
1$ we have
$$
\lim_{\overrightarrow{~i~}} \dim H_2(U_i, \mathbb{F}_p) / [G :
U_i] = 0 \hbox{ and } \lim_{\overrightarrow{~i~}}  \dim H_1(U_i,
\mathbb{F}_p) / [G : U_i] =  - \chi(G).$$\end{itemize}}

This direction of study lead us to prove  group theoretic
structure properties of the pro-$p$ groups from the class
$\mathcal{L}$ that are of independent interest.
  For a pro-$p$ group $G$ we denote the minimal number of generators
by $d(G)$. The following result shows that for an open subgroup $U$ of
$G$, the function $d(U)$ is a monotonic increasing function of
index $[G:U]$. We do not know whether this holds for abstract
limit groups.

\medskip
{\bf Theorem B} {\it Let $G \in {\mathcal L}$ be non-abelian and
$U$ be a  normal open subgroup of $G$ of index $p$. Then $d(U) >
d(G)$.}

\medskip
Theorem B is essentially used in the proof of the following result.

\medskip
{\bf Theorem C} {\it Let $N \leq H \leq G$, with   $G \in
{\mathcal L}$ non-abelian or $G$ is non-trivial free pro-$p$
product, $H$ finitely generated and $N$ normal in $G$. Then $[G :
H] < \infty$.}

\medskip
In the case of abstract limit groups Theorem C was proved by Bridson, Howie, Miller, Short in \cite{BHMS}. It is worth noting that
in the abstract case the full strength of Bass-Serre theory was
used and in the pro-$p$ case some part of this technique  is not
available. In particular geometric methods in the pro-$p$ case failed, whereas the proof in
\cite{BHMS} is purely geometric.

\section{Preliminaries}

\subsection{Pro-$p$ groups acting on pro-$p$ trees} \label{proptree}

One of the main tools we use in this paper is the action for
pro-$p$ groups on pro-$p$ trees. By definition  a pro-$p$ tree $\Gamma$ is a profinite graph, where for the pointed space $(E^*(\Gamma), *)$ defined by  $\Gamma / V(\Gamma)$ with the image of $V(\Gamma)$ as a distinguished point $*$ we have a short exact sequence
$$
0 \to [[\mathbb{F}_p((E^*(\Gamma), *)]] \Vightarrow{\delta} [[\mathbb{F}_p V(\Gamma)]] \Vightarrow{\epsilon} \mathbb{F}_p \to 0
$$
where $\delta(\bar{e}) = d_1(e) - d_0(e)$, $d_0, d_1 : E(\Gamma) \to V(\Gamma)$ define  beginning and end of an edge $e \in E(\Gamma)$, $\epsilon(v) = 1$
for every $v \in V(\Gamma)$ and $\delta(*) = 0$. We shall  assume that the action of a pro-$p$  group $G$ on a pro-$p$ tree $\Gamma$ is always continuous.
We denote by $\widetilde{G}$  the closed subgroup of $G$ generated by all vertex stabilizers $G_v$ for $v \in V(\Gamma)$. Then by \cite{RZ2} the
pro-$p$ group $G / \widetilde{G}$ acts freely on the pro-$p$ tree $\Gamma/ \tilde G$, hence $G / \widetilde{G}$ is free pro-$p$.

We note that if a pro-$p$ group
$G$ acts on a pro-$p$ tree $T$ cofinitely then we have the
decomposition of $G$ as the fundamental group of a graph of
pro-$p$ groups $(\G, T/ G)$ i.e. with the underlying graph $T / G$
and vertex and edge groups that are some vertex and edge
stabilizers of the action of $G$ on $T$. By
\cite[Prop.~4.4]{Z-M90} if $U$ is an open subgroup of $G$ then the
action of $U$ on $T$ is cofinite and so $U$ is the fundamental
group of the graph of pro-$p$ groups $(\U, T/ U)$  i.e. with
underlying graph $T / U$. Furthermore as in the Bass-Serre theory
of abstract groups   the vertex groups in $(\U, T/ U)$ are $G_v^g
\cap U$ where $v$ runs through the vertices of $T/ G$ and $g \in
G_v \setminus G / U$ and the edge groups are $G_e^g \cap U$ where
$e$ runs through the edges of $T/ G$ and $g \in G_e \setminus G /
U$.

Another important fact  is that  a pro-$p$  group $G$ acting on a
pro-$p$ tree with trivial edge stabilizers and $H$ is a pro-$p$
subgroup, possibly of infinite index, then results from the
classical Bass-Serre theory work in their pro-$p$ version. The
following result follows from the proof of \cite[Thm. 3.6]{Pavel};
see also the last section of \cite{melnikov}.

\begin{theorem} \label{kurosh} Let $G$ be a pro-$p$ group  acting on a pro-$p$ tree $T$ with
trivial edge stabilizers such that there exists a continuous
section $ \sigma : V (T) / G  \to V (T)$. Then $G$ is isomorphic
to a free pro-p product
$$
(\coprod_{v \in V(T) / G} G_{\sigma(v)}) \coprod ( G / \langle G_w
\mid w \in V(T) \rangle
$$
\end{theorem}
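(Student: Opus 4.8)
The plan is to split off the free pro-$p$ factor first and then apply the Kurosh-type decomposition to the subgroup generated by the vertex stabilizers. Write $\widetilde{G} = \langle G_w \mid w \in V(T) \rangle$ and set $F = G/\widetilde{G}$. By the result of \cite{RZ2} recalled above, $F$ acts freely on $T/\widetilde{G}$ and is therefore free pro-$p$; being free pro-$p$ it is projective, so the extension $1 \to \widetilde{G} \to G \to F \to 1$ splits. I would fix a continuous splitting $s : F \to G$ and put $\bar{F} = s(F)$, so that $G = \widetilde{G} \rtimes \bar{F}$ with $\bar{F} \cong F$ free pro-$p$. The target decomposition then follows once I show that $\widetilde{G}$ is the free pro-$p$ product of the vertex stabilizers and that the complement $\bar{F}$ glues this product into a single free pro-$p$ product together with $F$.

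Next I would apply the structure theorem \cite[Thm.~3.6]{Pavel} to the action of $\widetilde{G}$ on $T$. The edge stabilizers are trivial by hypothesis, and the vertex stabilizers of this action are exactly the $G_w$ (each $G_w \leq \widetilde{G}$), so what is needed is a continuous section $V(T)/\widetilde{G} \to V(T)$. This I would build from the given data: the projection $V(T)/\widetilde{G} \to V(T)/G$ is, via the free $\bar{F}$-action, controlled by $F$, and combining the continuous section $\sigma$ with the continuous splitting $s$ yields the assignment sending the $\widetilde{G}$-orbit lying over the $G$-orbit $v$ with $F$-coordinate $f$ to the vertex $s(f)\,\sigma(v)$; checking that this is a well-defined continuous section is the first technical point. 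Granting it, \cite[Thm.~3.6]{Pavel} presents $\widetilde{G}$ as the free pro-$p$ product of its vertex stabilizers, i.e. of the groups ${}^{s(f)}G_{\sigma(v)}$ as $v$ ranges over $V(T)/G$ and $f$ over $F$.

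Finally I would reassemble. Since $\bar{F}$ acts freely on $T/\widetilde{G}$, conjugation by $\bar{F}$ permutes the free factors ${}^{s(f)}G_{\sigma(v)}$ of $\widetilde{G}$ freely within each $G$-orbit block, which is precisely the normal-closure structure of the factor $\coprod_{v} G_{\sigma(v)}$ inside the free pro-$p$ product $(\coprod_{v} G_{\sigma(v)}) \amalg F$. Hence the semidirect product $\widetilde{G} \rtimes \bar{F}$ is identified with $(\coprod_{v \in V(T)/G} G_{\sigma(v)}) \amalg F$, giving the claim. The main obstacle is the generation step hidden in the profinite bookkeeping: one must guarantee that the free pro-$p$ product built over the profinite index space $V(T)/G$ is taken over a genuinely continuous sheaf of vertex groups, and that the comparison homomorphism it defines maps onto $\widetilde{G}$. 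This is exactly where the continuous section $\sigma$ is indispensable and where I would follow the argument of \cite[Thm.~3.6]{Pavel} closely. Injectivity of the resulting map is comparatively soft, since a nontrivial kernel element would have to stabilize an edge of $T$, contradicting the triviality of the edge stabilizers.
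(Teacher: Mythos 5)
Your overall strategy is reasonable, but note first that the paper does not actually prove this statement: it is presented as following ``from the proof of'' \cite[Thm.~3.6]{Pavel}, with a further pointer to the last section of \cite{melnikov}, so there is no argument in the text to compare yours against line by line. Your reduction is a sensible way to organize such a deduction: the splitting $G=\widetilde{G}\rtimes\bar F$ via projectivity of the free pro-$p$ quotient $F=G/\widetilde{G}$ is correct; the identification $V(T)/\widetilde{G}\cong V(T)/G\times F$ is legitimate because $F$ acts freely on $V(T)/\widetilde{G}$ (if $g$ carries $v$ into its $\widetilde{G}$-orbit then $g\in G_v\widetilde{G}=\widetilde{G}$), and your formula $(v,f)\mapsto s(f)\sigma(v)$ does give a well-defined continuous section for the $\widetilde{G}$-action, with stabilizers ${}^{s(f)}G_{\sigma(v)}$.

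Two points, however, are genuine soft spots rather than routine verifications. First, the step you delegate to \cite[Thm.~3.6]{Pavel} --- that $\widetilde{G}$, generated by its vertex stabilizers and acting on $T$ with trivial edge stabilizers and a continuous section, is the free pro-$p$ product of the groups ${}^{s(f)}G_{\sigma(v)}$ --- is precisely the theorem being proved, in the special case $G=\widetilde{G}$. The cited result concerns normal subgroups of free constructions acting on their standard trees, so applying it to $\widetilde{G}$ acting on an arbitrary pro-$p$ tree uses its proof rather than its statement; your argument therefore relocates the essential content rather than discharging it (which is consistent with what the paper itself does, but should be acknowledged to avoid circularity). Second, your closing claim that injectivity of the comparison map is ``comparatively soft'' has it backwards for the pro-$p$ setting: surjectivity is the easy half, while injectivity of the canonical homomorphism from a free pro-$p$ product over a profinite index space is exactly where the difficulty of \cite{melnikov} and \cite{Pavel} lies. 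Moreover the remark that a nontrivial kernel element ``would have to stabilize an edge of $T$'' does not apply as stated, since kernel elements live in the abstract free product $(\coprod_{v}G_{\sigma(v)})\amalg F$, not in $G$; one must either transport them along an equivariant morphism of trees, or first identify the normal closure of $\coprod_{v}G_{\sigma(v)}$ in the abstract product as $\coprod_{(v,f)}{}^{f}G_{\sigma(v)}$, check that the comparison map restricts to an isomorphism of this normal closure onto $\widetilde{G}$ matching your decomposition factor by factor, and then conclude that the kernel, being contained in that normal closure, is trivial.
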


\subsection{The class of pro-$p$ groups ${\mathcal L}$ : extensions of centralizers} \label{sectionL}

The class of pro-$p$ groups $\mathcal L$ was defined in \cite{KZ}.
The groups in the class $\mathcal L$ are  finitely generated
pro-$p$ subgroups of $G_n$, $n \geq 0$, where $G_0$ can be any
finite rank free pro-$p$ group and for $n \geq 1$ we have  $G_n =
G_{n-1} \coprod_C (C \times \mathbb{Z}_p^s)$ for some $s \geq 1$,
where $C$ can be any self-centralized pro-$p$ subgroup in
$G_{n-1}$.

 Let $G \in {\mathcal L }$. Then $G \leq G_n$, where $n$ is the height of $n$ i.e. is the smallest $n$ possible. Then $G_n$ acts on the pro-$p$ tree $T$
 associated to the decomposition $G_n = G_{n-1} \coprod_C (C \times \mathbb{Z}_p^s)$. By restriction $G$ acts on $T$ but in general $T/ G$ is not finite and
 hence applications of the pro-$p$ version of Bass-Serre theory for pro-$p$ groups is difficult for this
 action.
 Still in  \cite{PavelIlir} Snopche, Zalesskii proved that $T$ can be modified to a pro-$p$ tree $\Gamma$ with cofinite $G$-action. We shall use this  several times in the paper and therefore
 state in the form we need.

 \begin{theorem}\label{cofiniteaction} There is a pro-$p$ tree $\Gamma$  on which $G$ acts cofinitely (i.e. $\Gamma/G$ is finite) such that the vertex stabilizers
 and the non-trivial edge stabilizers of the
 action of $G$ on
 $\Gamma$ are  vertex and edge stabilizers  of the action of $G$ on $T$. Thus $G=\pi_1(\G, \Gamma/G)$ is the fundamental group
 of pro-$p$ groups, where  vertex and edge groups are certain stabilizers of vertices and edges of $\Gamma$ in $G$.\end{theorem}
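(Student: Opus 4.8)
The plan is to realize $\Gamma$ as a minimal $G$-invariant pro-$p$ subtree of $T$, possibly after collapsing the part on which $G$ acts with trivial edge stabilizers, and to force the quotient $\Gamma/G$ to be finite using the finite generation of $G$. The second assertion of the theorem is then essentially free: since $\Gamma$ is built inside $T$ (or as a $G$-equivariant collapse of a subtree of $T$), every vertex stabilizer and every non-trivial edge stabilizer of the $G$-action on $\Gamma$ is, by construction, a stabilizer of the $G$-action on $T$. All the difficulty is concentrated in the finiteness of $\Gamma/G$; once that is known, the pro-$p$ Bass--Serre correspondence recalled above yields the decomposition $G=\pi_1(\mathcal G,\Gamma/G)$ with the stated vertex and edge groups.

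First I would produce a non-empty minimal $G$-invariant pro-$p$ subtree $D\subseteq T$. Fix a finite generating set $g_1,\dots,g_d$ of $G$. Each $g_i$ acts on $T$ either elliptically, fixing a vertex, or hyperbolically, with an axis; in the abstract theory the union of these fixed points and axes, together with the finitely many geodesic segments joining them, generates a $G$-invariant subtree whose quotient is finite. For the pro-$p$ version I would take $D$ to be the intersection of all $G$-invariant pro-$p$ subtrees of $T$, which is again a pro-$p$ subtree by compactness and is the natural candidate for the minimal invariant subtree (one must separately check it is non-empty). On $D$ the stabilizers are of three kinds: the non-trivial edge stabilizers are the intersections $G\cap C^{g}$ with conjugates of the amalgamated edge group $C$, and the vertex stabilizers are the intersections $G\cap G_{n-1}^{g}$ and $G\cap(C\times\mathbb Z_p^{s})^{g}$; an edge has trivial stabilizer precisely when it lies in the free part of the action.

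To obtain finitely many orbits I would treat the free part and the essential part separately. The subgroup $\widetilde G$ generated by the vertex stabilizers is normal in $G$, and by the result recalled in Subsection~\ref{proptree} the quotient $G/\widetilde G$ is free pro-$p$; since $G$ is finitely generated, $G/\widetilde G$ is free of finite rank. Collapsing every maximal $G$-invariant subforest with trivial edge stabilizers to a point therefore contributes only finitely many edge orbits, namely the loops accounting for the rank of $G/\widetilde G$, and leaves the non-trivial edge orbits untouched; the result of this collapse is the tree $\Gamma$. It then remains to see that there are only finitely many orbits of edges with non-trivial stabilizer and finitely many orbits of vertices.

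This last point is where I expect the main obstacle to lie, and where the pro-$p$ setting departs sharply from the abstract one: a finitely generated pro-$p$ group acting on a pro-$p$ tree need not admit a minimal invariant subtree with finite quotient, the profinite topology can keep $\Gamma/G$ infinite even when the underlying abstract core is finite, and pro-$p$ folding is not always available. The real content is therefore to deduce finiteness of $\Gamma/G$ from finite generation of $G$. Here I would use that $G\in\mathcal L$ is finitely presented, indeed of type $FP_\infty$: the short exact sequence $0\to[[\mathbb F_p(E^*(\Gamma),*)]]\to[[\mathbb F_p V(\Gamma)]]\to\mathbb F_p\to0$ is the start of a free resolution of the trivial module over $\mathbb F_p[[G]]$ in which the two middle modules are permutation modules on the edge and vertex orbits. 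Finite presentation of $G$ should force these permutation modules to be finitely generated, which is exactly the assertion that $\Gamma/G$ is finite. Checking that $\Gamma$ can be chosen with this finiteness property while simultaneously keeping every stabilizer among the stabilizers of the original action on $T$ is the delicate step that I expect to occupy the bulk of the proof.
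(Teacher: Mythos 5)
The paper does not actually prove Theorem \ref{cofiniteaction}: it is quoted from Snopche--Zalesskii \cite{PavelIlir} (``We shall use this several times in the paper and therefore state in the form we need''), so there is no internal proof to compare yours against and I can only judge your outline on its own terms. Your outline identifies reasonable objects (a minimal invariant subtree, collapsing the trivially-stabilized part, the exact sequence of permutation modules), and you are candid about where the difficulty sits; but the step you defer is the entire content of the theorem, and the mechanism you propose for it does not work.

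Concretely, you want to deduce finiteness of $\Gamma/G$ from finite presentation (or type $FP_\infty$) of $G$, via the claim that this ``should force'' the permutation modules $[[\mathbb{F}_p V(\Gamma)]]$ and $[[\mathbb{F}_p(E^*(\Gamma),*)]]$ to be finitely generated. Being of type $FP_n$ asserts the existence of \emph{one} partial resolution by finitely generated projectives; it places no constraint on an arbitrary exact sequence of permutation modules arising from a tree action. The group $G$ itself refutes the inference: it is of type $FP_\infty$ yet acts on $T$ with $T/G$ infinite in general (the paper says so explicitly in Subsection \ref{sectionL}), so the permutation modules for that action are not finitely generated. Passing to a minimal invariant subtree and collapsing the free part does not obviously repair this: in the pro-$p$ setting, unlike the abstract one, a finitely generated group acting on a pro-$p$ tree need not act cofinitely on any invariant subtree, and the number of orbits of edges with \emph{non-trivial} stabilizer --- i.e.\ the number of double cosets $GgC$ with $G\cap C^g\neq 1$ --- is precisely the quantity you must bound and is untouched by your collapse. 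What is actually needed, and what \cite{PavelIlir} supplies, is an argument specific to the class $\mathcal L$, exploiting the explicit structure of $G_n=G_{n-1}\coprod_C(C\times\mathbb{Z}_p^s)$ and the double coset spaces $G\backslash G_n/G_{n-1}$, $G\backslash G_n/(C\times\mathbb{Z}_p^s)$, $G\backslash G_n/C$; no abstract homological finiteness property of $G$ can substitute for this. Two smaller problems: the intersection of all $G$-invariant pro-$p$ subtrees may well be empty, so minimality must come from Zorn's lemma plus compactness rather than from intersecting everything; and your claim that the collapsed free part contributes only $\mathrm{rank}(G/\widetilde{G})$ many loop orbits already presupposes the cofiniteness you are trying to establish.
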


\section{Proof of Theorem B}

We start with a simple lemma about finite $p$-groups.

 \begin{lemma} \label{X} Let $K$ be a finite $p$-group with a normal subgroup $A \simeq \mathbb{F}_p^d$ and $[K : A] = p$. Let
 $y,z \in K\setminus A$  such that $ \Phi(K) z = \Phi(K)  y$. Then $z^p = y^p$ and the elements of the set
 $\{ (z y^{-1})^{x^j} \}_{0 \leq j \leq p-1}$ are not linearly independent (over $\mathbb{F}_p$) for any $x \in K \setminus A$.
\end{lemma}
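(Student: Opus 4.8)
The plan is to translate everything into linear algebra over the group ring $\mathbb{F}_p[\langle\phi\rangle]$, where $\phi$ is the conjugation action of a generator of $K/A$ on the elementary abelian group $A$. First I would record that $\Phi(K)\le A$: since $A$ is abelian and $K=\langle A,y\rangle$, we have $[K,K]\le A$, and since $K/A\cong\mathbb{Z}/p$ every $p$-th power of an element of $K$ lies in $A$, so $K^p\le A$; hence $\Phi(K)=K^p[K,K]\le A$. In particular the hypothesis $\Phi(K)z=\Phi(K)y$ gives $a:=zy^{-1}\in\Phi(K)\subseteq A$, so $z=ay$. Because $A$ is abelian, the conjugation action of $K$ on $A$ factors through $K/A\cong\mathbb{Z}/p$; writing $\phi\in\mathrm{GL}(A)$ for the action of the generator $yA$, we have $\phi^p=1$, and over $\mathbb{F}_p$ the operator $\phi-1$ is nilpotent because $(\phi-1)^p=\phi^p-1=0$. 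Conjugation by any $g\in K\setminus A$ is then $\phi^i$ with $p\nmid i$.

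The heart of the argument is the polynomial identity $1+t+\cdots+t^{p-1}=(t-1)^{p-1}$ in $\mathbb{F}_p[t]$ (divide $t^p-1=(t-1)^p$ by $t-1$). Applied to $\phi$, it identifies the norm operator on $A$:
\[
N:=1+\phi+\cdots+\phi^{p-1}=(\phi-1)^{p-1}.
\]
I would then show that $(\phi-1)^{p-1}$ annihilates $\Phi(K)$. It kills $[K,K]$, which is contained in $(\phi-1)A$, because $(\phi-1)^{p-1}(\phi-1)=(\phi-1)^p=0$; and it kills every $p$-th power, since for $a'\in A$ and $p\nmid i$ the expansion $(a'y^i)^p=\big(\textstyle\prod_{j=0}^{p-1}\phi^{ij}(a')\big)y^{ip}$ equals, additively in $A$, the element $(\phi^i-1)^{p-1}(a')+i\,y^p$, where $(\phi^i-1)^{p-1}(a')$ lies in $\operatorname{im}(\phi-1)^{p-1}$ and $y^p$ is $\phi$-fixed (as $y$ commutes with $y^p$). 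Here I use that for $p\nmid i$ one has $\phi^i-1=(\phi-1)u$ with $u$ a unit of the local ring $\mathbb{F}_p[\phi]$ (whose maximal ideal is generated by the nilpotent $\phi-1$), so $(\phi^i-1)^{p-1}$ and $(\phi-1)^{p-1}$ generate the same ideal; applying $(\phi-1)^{p-1}$ once more therefore lands in $\operatorname{im}(\phi-1)^{2(p-1)}=0$ since $2(p-1)\ge p$. Hence $(\phi-1)^{p-1}\Phi(K)=0$.

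Both conclusions now follow by evaluating a norm on $a\in\Phi(K)$. For $z^p=y^p$: the same expansion gives $z^p=(ay)^p=\big(\prod_{j=0}^{p-1}\phi^j(a)\big)y^p=N(a)\,y^p$, and $N(a)=(\phi-1)^{p-1}(a)=0$, so $z^p=y^p$. For the dependence statement, let $\psi=\phi^i$ (with $p\nmid i$) be conjugation by $x\in K\setminus A$; then $(zy^{-1})^{x^j}=\psi^{-j}(a)$, and since $\psi^p=1$ the set $\{\psi^{-j}(a)\}_{0\le j\le p-1}$ coincides with $\{\psi^{j}(a)\}_{0\le j\le p-1}$, whose product, written additively in $A$, is $\sum_{j=0}^{p-1}\psi^{j}(a)=N_{\psi}(a)=(\psi-1)^{p-1}(a)=0$ by the same ideal argument applied to $\psi=\phi^i$. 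This is a nontrivial $\mathbb{F}_p$-linear relation (all coefficients equal to $1$) among the $p$ elements $\{(zy^{-1})^{x^j}\}_{0\le j\le p-1}$ of the $\mathbb{F}_p$-space $A$, so they are linearly dependent.

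I expect the only real obstacle to be the clean passage from group-theoretic data to the single statement that $(\phi-1)^{p-1}$ annihilates $\Phi(K)$. The two delicate points are the non-commutative $p$-th-power expansion $(a'y^i)^p=\big(\prod_{j}\phi^{ij}(a')\big)y^{ip}$, and the observation that conjugation by an \emph{arbitrary} $x\notin A$ produces $\psi-1$ generating the same principal ideal as $\phi-1$ in the local ring $\mathbb{F}_p[\phi]$; once these are in place, the identity $1+t+\cdots+t^{p-1}=(t-1)^{p-1}$ together with $(\phi-1)^p=0$ makes both parts immediate, and the remaining verifications are routine module bookkeeping.
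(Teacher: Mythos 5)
Your proof is correct and follows essentially the same route as the paper's: both arguments reduce everything to the fact that the norm element $1+\phi+\cdots+\phi^{p-1}$ of $\mathbb{F}_p[K/A]$ annihilates $\Phi(K)$, and both then extract the two conclusions from the $p$-th power expansion $(ay)^p=N(a)y^p$ and the all-coefficients-one relation $\sum_{j}\psi^{j}(a)=0$. Your identification of the norm with $(\phi-1)^{p-1}$ and the local-ring/unit argument simply make explicit what the paper compresses into the phrase that the augmentation ideal annihilates the socle of $\mathbb{F}_p[K/A]$.
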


\begin{proof}  Note that since $ \Phi(K) z = \Phi(K)  y$ we have that  $z = y c$,  $c \in \Phi(K)$. We view $A$ as a right  $\mathbb{F}_p[K/A] $-module  via
conjugation, denote by upper right index the action of $\mathbb{F}_p[K/A]$
on $A$.  Then $c \in \Phi(K) = [K,K] K^p = \langle A^I, z^p
\rangle \leq A$ where $I$ is the augmentation ideal of  of
$\mathbb{F}_p[K/A]$.

Note that $[K, K] = \langle a^{z-1} | a \in A \rangle$ and $(a
z^j)^p = a^{1 + z + \ldots + z^{p-1}} z^{j p}$ for $1 \leq j \leq
p-1$. Then $y^p = (zc )^p = z^p c^{1 + z + \ldots + z^{p-1}}$. We
use $\bar z$ for the image of $z$ in $K/A$. Since the augmentation
ideal $I$ annihilates the socle of $\mathbb{F}_p[K/A]$,  the
element $c^{1 + z + \ldots + z^{p-1}} \in \Phi(K)^{(1 + \bar
z + \ldots + \bar z^{p-1})}$ is the trivial element, hence $y^p =
z^p$.

Since $(z y^{-1})^{z^j} = (z c z^{-1})^{z^j} = c^{z^{j-1}}$ and
$c^{1 + z + \ldots + z^{p-1}} $ is the trivial element,  the
elements of the set $\{ (z y^{-1})^{z^j} \}_{0 \leq j \leq p-1}$
are not linearly independent (over $\mathbb{F}_p$). Finally note
that $\{ (z y^{-1})^{z^j} \}_{0 \leq j \leq p-1}= \{ (z
y^{-1})^{x^j} \}_{0 \leq j \leq p-1}$.
\end{proof}

\begin{lemma} \label{generation}  a) Let $A = G_1 \coprod_C G_2$ be  an amalgamated free pro-$p$ product where $C$ is procyclic. Then $$d(G_1) + d(G_2) - 1 \leq d(A) \leq d(G_1) + d(G_2)$$ and $d(A) = d(G_1) + d(G_2)$ if and only if $C \subseteq \Phi(G_1) \cap \Phi(G_2)$.

b) Let $A = HNN(G_1, C, t) = \langle G_1, t | c^t = c_1 \rangle$
be a proper pro-$p$ HNN extension with procyclic associated subgroups
$\langle c \rangle  = C \simeq C_1 = \langle c_1 \rangle$ and
stable letter $t$. Then $$d(G_1) \leq d(A) \leq d(G_1) + 1$$ and
$d(A) = d(G_1) + 1$ if and only if $ c \Phi(G_1) = c_1 \Phi(G_1)$.

c) Let $A$ be a pro-$p$ group built from the pro-$p$ group $B$
after applying finitely many pro-$p$ HNN extensions with stable
letters $t_1, \ldots , t_k$ and associated pro-$p$ subgroups $C_i
= \langle c_i \rangle$ and $\widetilde{C}_i = \langle \tilde{c}_i
\rangle$ for $1 \leq i \leq k$, i.e. $c_i^{t_i} = \tilde{c}_i$.
Assume further that $C_i \cup \widetilde{C}_i \subseteq B$ for all
$1 \leq i \leq k$. Then $$d(B) \leq d(A) \leq d(B) + k.$$
Furthermore

1. $d(A)  =  d(B) + k$ if and only if    $ c_i \Phi(B) =
\tilde{c_i} \Phi(B)$ for all  $1 \leq i \leq k$;

2.  $d(A)  =  d(B)$ if and only if  the images of the elements $\{
c_i^{-1} \tilde{c}_i \}_{1 \leq i \leq k}$  in $B / \Phi(B)$ are
linearly independent (over $\mathbb{F}_p$).
\end{lemma}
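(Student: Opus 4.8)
The plan is to reduce everything to a computation in the Frattini quotient, using the standard identification $d(G) = \dim_{\mathbb{F}_p} G/\Phi(G) = \dim_{\mathbb{F}_p} H_1(G,\mathbb{F}_p)$ valid for any finitely generated pro-$p$ group $G$. Since passing to $A/\Phi(A)$ abelianizes $A$ and kills $p$-th powers, each defining relation $c_i^{t_i} = \tilde c_i$ collapses to the single relation $\bar c_i = \bar{\tilde c}_i$; equivalently $c_i^{-1}\tilde c_i = [c_i,t_i] \in [A,A] \subseteq \Phi(A)$. I would therefore set $r := \dim_{\mathbb{F}_p}\langle \overline{c_1^{-1}\tilde c_1}, \dots, \overline{c_k^{-1}\tilde c_k}\rangle$ for the dimension of the span of these classes inside $B/\Phi(B)$ and aim to prove the single clean formula $d(A) = d(B) + k - r$. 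Since $0 \le r \le k$, all of the stated conclusions follow at once: the bound $d(A) \le d(B)+k$ with equality iff $r=0$, i.e. $c_i\Phi(B) = \tilde c_i\Phi(B)$ for all $i$ (part 1); and the bound $d(A) \ge d(B)$ with equality iff $r=k$, i.e. the $\overline{c_i^{-1}\tilde c_i}$ are linearly independent (part 2).

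For the upper bound $d(A) \le d(B)+k-r$ I would count generators of $A/\Phi(A)$. Let $N = \overline{\langle \Phi(B),\, c_1^{-1}\tilde c_1, \dots, c_k^{-1}\tilde c_k\rangle}$, a closed subgroup of $B$ containing $\Phi(B)$, so that $N/\Phi(B)$ is exactly the span of the $\overline{c_i^{-1}\tilde c_i}$ and hence $\dim_{\mathbb{F}_p} B/N = d(B) - r$. Because $A$ is topologically generated by $B$ together with $t_1,\dots,t_k$, its Frattini quotient is spanned by the image of $B$ and the $k$ classes $\bar t_i$; and since $N \subseteq B \cap \Phi(A)$ (each $c_i^{-1}\tilde c_i$ lies in $\Phi(A)$ as noted above), the image of $B$ in $A/\Phi(A)$ is a quotient of $B/N$ and so has dimension at most $d(B)-r$. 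This yields $d(A) \le (d(B)-r)+k$.

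The matching lower bound is where the actual content lies, and I would obtain it by exhibiting an explicit elementary abelian quotient of $A$ of the right size. Set $W = (B/N) \oplus \mathbb{F}_p^k$, an $\mathbb{F}_p$-vector space of dimension $(d(B)-r)+k$, and let $\beta : B \to B/N \hookrightarrow W$ be the natural projection. Because $c_i$ and $\tilde c_i$ have the same image in $B/N$ by construction of $N$, the conjugacy relations $\beta(\tilde c_i) = \beta(c_i)$ hold trivially in the abelian group $W$; the universal property of the pro-$p$ HNN extension, sending the stable letters to the $k$ standard basis vectors of $\mathbb{F}_p^k$, therefore produces a continuous surjection $A \twoheadrightarrow W$. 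As $W$ is elementary abelian this factors through $A/\Phi(A)$, giving $d(A) = \dim_{\mathbb{F}_p} A/\Phi(A) \ge \dim_{\mathbb{F}_p} W = (d(B)-r)+k$. Combining the two bounds forces $d(A) = d(B)+k-r$ and the three assertions.

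The main point to watch is the lower bound: one must ensure that the stable letters genuinely contribute $k$ \emph{independent} new generators and that $B$ contributes exactly $d(B)-r$, with no further collapse, and the explicit map onto $W$ settles both simultaneously — provided the pro-$p$ HNN universal property is invoked in the continuous category. Parts (a) and (b) follow the same template: (b) is literally the case $k=1$, while for (a) one computes $A/\Phi(A)$ as the pushout $(G_1/\Phi(G_1)) \oplus_{\mathbb{F}_p} (G_2/\Phi(G_2))$ amalgamated over the line (or zero subspace) spanned by the image of the procyclic generator $c$ of $C$, so that the dimension drops by exactly $1$ iff $c \notin \Phi(G_1)$ or $c \notin \Phi(G_2)$, i.e. $d(A) = d(G_1)+d(G_2)$ precisely when $C \subseteq \Phi(G_1)\cap\Phi(G_2)$.
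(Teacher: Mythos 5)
Your proof is correct and follows essentially the same route as the paper, which simply invokes the identity $d(G)=\dim_{\mathbb{F}_p}G/\Phi(G)$ and leaves the Frattini-quotient computation implicit; you carry that computation out explicitly via the formula $d(A)=d(B)+k-r$. The only cosmetic difference is that you handle part (c) directly in one step rather than by induction on $k$ from part (b), which is arguably cleaner since it avoids having to translate the condition $c_i\Phi(A_{i-1})=\tilde c_i\Phi(A_{i-1})$ back into a statement about $\Phi(B)$ at each stage.
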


\begin{proof} Parts a) and b) follow directly from the fact that for a pro-$p$ group $G$ we have $d(G) = \dim_{\mathbb{F}_p} G/\Phi(G)$, where $\Phi(G) =  G' G^p$. Note that by \cite{Ribes} pro-$p$ amalgamated products over procyclic subgroup are proper. Part c) follows from part b) by induction on $k$.
\end{proof}
\begin{theorem} \label{growing} Let $G \in {\mathcal L}$  be non-abelian and $U$ be a  normal open subgroup of $G$ of index $p$. Then  $d(U) > d(G)$.
\end{theorem}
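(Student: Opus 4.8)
The plan is to reduce the inequality to a statement about Frattini quotients and then read it off the graph-of-pro-$p$-groups decompositions of $G$ and $U$. Since $U$ has index $p$ it is a maximal subgroup, so $\Phi(G)\subseteq U$, and as $U\le G$ we also have $\Phi(U)\subseteq\Phi(G)$. Using $d(H)=\dim_{\mathbb{F}_p}H/\Phi(H)$ and the chain $\Phi(U)\subseteq\Phi(G)\subseteq U\subseteq G$ we obtain
\[
d(U)=\dim_{\mathbb{F}_p}U/\Phi(G)+\dim_{\mathbb{F}_p}\Phi(G)/\Phi(U)=\bigl(d(G)-1\bigr)+\dim_{\mathbb{F}_p}\Phi(G)/\Phi(U),
\]
so the theorem is \emph{equivalent} to the assertion $\dim_{\mathbb{F}_p}\Phi(G)/\Phi(U)\ge 2$. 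I would prove this by induction on the height $n$ of $G$. In the base case $n=0$, $G$ is free pro-$p$ of rank $d(G)\ge 2$ (being non-abelian), and the pro-$p$ Nielsen--Schreier formula gives $d(U)=1+p(d(G)-1)$, whence $d(U)-d(G)=(p-1)(d(G)-1)>0$, i.e. $\dim_{\mathbb{F}_p}\Phi(G)/\Phi(U)=(p-1)(d(G)-1)+1\ge 2$. So assume $n\ge 1$.

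For the inductive step I would use Theorem \ref{cofiniteaction} to write $G=\pi_1(\mathcal{G},\Gamma/G)$ as the fundamental group of a finite graph of pro-$p$ groups with procyclic edge groups, the vertex groups being stabilizers that lie in $\mathcal{L}$ of height $<n$ or are abelian (of type $C\times\mathbb{Z}_p^s$). Since $U$ is open its action on $\Gamma$ is again cofinite, so $U=\pi_1(\mathcal{U},\Gamma/U)$, where $\Gamma/U\to\Gamma/G$ is the quotient attached to the epimorphism $G\twoheadrightarrow Q:=G/U\cong\mathbb{Z}/p$: a vertex (resp. edge) $y$ of $\Gamma/G$ whose group surjects onto $Q$ has a single preimage carrying the index-$p$ group $G_y\cap U$, while one with $G_y\subseteq U$ lifts to $p$ copies carrying groups conjugate to $G_y$. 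Applying Lemma \ref{generation} (in particular part c) to both decompositions expresses $d(G)$ and $d(U)$ through the generator numbers of the vertex groups, the positions of the procyclic edge groups in the relevant Frattini quotients, and the topology of the underlying finite graphs.

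The comparison then has two engines of growth. First, whenever a non-abelian vertex group $G_v$ is not contained in $U$ it drops to the index-$p$ subgroup $G_v\cap U$, whose generator number is strictly larger by the induction hypothesis. Second, the free quotient $G/\widetilde{G}$ (which is free pro-$p$, $\widetilde{G}$ being generated by the vertex stabilizers) and, more generally, the amalgamation/HNN structure produce free generators whose number grows on passing to $U$, exactly as in the Nielsen--Schreier count. The \emph{main obstacle}, and the reason Lemma \ref{X} is needed, is that amalgamation and HNN extension over a procyclic edge group can \emph{lower} the generator count (Lemma \ref{generation}, parts 1 and 2), so one must rule out a total cancellation of the growth. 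Lemma \ref{X} supplies the required bound: it caps the linear dependence among the conjugates $\{(zy^{-1})^{x^j}\}_{0\le j\le p-1}$, equivalently among the stable-letter elements $\{c_i^{-1}\tilde{c}_i\}$ of Lemma \ref{generation}, so that the edge contributions to $d(U)$ cannot collapse as far as the crude bounds permit. I expect the genuinely delicate points to be the case $p=2$ and the case in which every vertex group is abelian, where the surplus $\dim_{\mathbb{F}_p}\Phi(G)/\Phi(U)\ge 2$ must be extracted entirely from the combinatorics of $\Gamma/U\to\Gamma/G$ together with the positions of the procyclic edge groups.
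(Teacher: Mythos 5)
Your opening reduction is correct and tidy: since $\Phi(U)\subseteq\Phi(G)\subseteq U$ and $\dim_{\mathbb{F}_p}U/\Phi(G)=d(G)-1$, the theorem is indeed equivalent to $\dim_{\mathbb{F}_p}\Phi(G)/\Phi(U)\geq 2$, and your base case (free pro-$p$, Nielsen--Schreier) is fine. Your inductive framework --- induction on height, the cofinite graph-of-groups decomposition from Theorem \ref{cofiniteaction}, Lemma \ref{generation} for the generator counts, Lemma \ref{X} to block cancellation --- is exactly the paper's strategy. But what you have written is a plan, not a proof: the entire content of the argument lives in the case analysis you defer, and in the two places you flag as ``delicate'' your proposed tools do not in fact suffice.

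Concretely: (1) In the HNN case with the whole base group contained in $U$ (the paper's case 2.3, $G_1\subseteq U$), there is no index-$p$ vertex subgroup to which the induction hypothesis applies, and Lemma \ref{X} is inapplicable because it needs an element of $K\setminus A$, i.e.\ it needs the relevant vertex group to surject onto $G/U$. The crude count $d(U)\geq 1-p+p\,d(G_1)\geq d(G_1)+1\geq d(G)$ can be an equality exactly when $p=2$ and $d(G_1)=2$, and the paper must kill this residual case by a separate explicit computation in the quotients $\bar{G}=G/\Phi(G_1)^{G}$ and $\bar{G}/\bar{C}$, splitting on whether $c\in\Phi(G_1)$. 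Your assertion that ``Lemma \ref{X} supplies the required bound'' is false here. (2) When vertex groups are abelian there is genuinely \emph{no} growth from them (an open subgroup of $\mathbb{Z}_p^k$ again needs $k$ generators), so the surplus must come from the equality criteria in Lemma \ref{generation}; ruling out equality requires two inputs you never invoke: the fact that $\chi(G)=0$ forces $G$ abelian (\cite[Prop.~3.4]{PavelIlir}), which excludes both vertex groups (or the HNN base) being abelian when the edge group is infinite, and the fact that the edge group $C$ is a direct factor of its centralizer in an abelian vertex group, hence $C\not\subseteq\Phi(G_v)$, which contradicts the equality condition $C\subseteq\Phi(G_1)\cap\Phi(G_2)$. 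Without these two ingredients and the completed case-by-case comparison (the paper needs six cases: three for the amalgam, three for the HNN extension, according to whether $CU=G$, $C\subseteq U$ with vertex groups surjecting onto $G/U$, or a vertex group contained in $U$), the argument does not close.
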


\begin{proof}
 Let $G \leq G_n$, where $n$ is the height of $G$. Then by Theorem \ref{cofiniteaction} $G=\Pi_1(\G,\Gamma)$ is the fundamental group of a finite connected graph of groups with procyclic edge groups and
 with vertex groups from the class $\mathcal L$ all of smaller height. Inducting on the height $n$ of $G$ we can assume that the theorem holds for all vertex groups.
 Furthermore we can induct on the size of  $\Gamma$.

 Assume that $e$ is an edge of $\Gamma$. Note that the edge stabilizer $C$ of $e$ is either infinite procyclic or trivial.  Then $G$ is either a free pro-$p$ product with amalgamation  $G_1 \coprod_C G_2$  or a pro-$p$ HNN extension $\langle G_1 , t | C^t = C_1 \rangle$. By induction the result holds for both $G_1$ and $G_2$.

1. Assume that $G = G_1 \coprod_C G_2$. Note that if $C \simeq
\mathbb{Z}_p$ then at least one of $G_1$ and $G_2$ is not abelian.
Indeed if not $\chi(G) = \chi(G_1) + \chi(G_2) - \chi(C) = 0 + 0 -
0 =  0$ and by \cite[Prop.~3.4]{PavelIlir} $G$ is abelian, a
contradiction.

Let $T$ be the standard pro-$p$ tree on which $G$ acts i.e. $T/G$
has one edge and two  vertices, the edge stabilizers are
conjugates of  $C$ and the vertex stabilizers are conjugates of
$G_1$ or of $G_2$. By restriction $U$ acts on $T$ and since $[G :
U] < \infty$ we can decompose $U$ as the fundamental group
 of a finite graph of pro-$p$ groups  $(\U,T/U)$ with underlying graph $T / U$ (see Proposition 4.4 in \cite{Z-M90} or subsection \ref{proptree} in the preliminaries).
Then we have three cases.

1.1. Suppose that $C U = G$. Then $C \not= 1$, $U G_1 = G = U G_2$
and $T/U$ has one edge with two  vertices. The vertex groups of
the graph of groups $(\U,T/U)$ are $U \cap G_1$ and $U \cap G_2$
and the edge group is $U \cap C$. Thus $U$ is the proper pro-$p$
amalgamated product  $(U \cap G_1) \coprod_{C \cap U} (U \cap
G_2)$.  By symmetry we can assume that $G_1$ is not abelian but
$G_2$ might be abelian. Then by induction $d(U \cap G_1) \geq 1 +
d(G_1)$ and $d(U \cap G_2) \geq d(G_2)$, so
$$
d(U) \geq d(U \cap G_1) + d(U \cap G_2) - 1 \geq d(G_1) + 1 +
d(G_2) -1 = d(G_1) + d(G_2)
 \geq d(G).$$
But if $d(U) = d(G)$ then $d(G) = d(G_1) + d(G_2)$, so by Lemma
\ref{generation} $C \subseteq \Phi(G_1) \cap \Phi(G_2)\leq \Phi
(G_1) \subseteq U$, in particular $C \subseteq U$ a contradiction.
Thus $d(U)>d(G)$.

1.2. Suppose that $G_2 \subseteq U$ and $G_1 U = G$. Then $U$ is
the fundamental group of the graph of groups  $(\U,T/U)$ with
vertex groups $G_1 \cap U$ and $G_2, G_2^x, \ldots,
G_2^{x^{p-1}}$, where $x \in G \setminus U$, so we can assume that
$x \in G_1 \setminus U$. The edge group linking the vertex groups
$G_1 \cap U$ and $G_2^{x^i}$ is $C^{x^i}$. Then
$$
d(U) \geq d(G_1 \cap U) + p. d(G_2) - p.
$$
Note that $C$ is a direct factor in its  centralizer in $G_2$ and
this centraliser is  a finitely generated abelian group (it might
be procyclic). Thus if $d(G_2) = 1$ we get that $C = G_2$ and $G =
G_1$ and by induction the lemma holds for $G_1$, so that we are
done. Then we can assume $d(G_2) \geq 2$.

Assume first that $G_1$ is not abelian, so by induction $d(G_1
\cap U) \geq d(G) + 1$. Then
$$
d(U) \geq d(G_1 \cap U) + p . d(G_2) - p \geq d(G_1) + 1 + p .
d(G_2) - p \geq d(G_1) + d(G_2) + 1 \geq d(G) + 1.
$$
Assume now that $G_1$ is abelian. Since $d(G_2) \geq     2$
$$
d(U) \geq d(G_1 \cap U) + p. d(G_2) - p = d(G_1) + p.( d(G_2) - 1)
\geq$$ $$ d(G_1) + 2 d(G_2) - 2 \geq d(G_1) + d(G_2) \geq d(G).
$$
If $d(G) = d(U)$ then $d(G_1) + d(G_2) = d(G)$ and by Lemma
\ref{generation} $C \subseteq \Phi(G_1) \cap \Phi(G_2) \subseteq
\Phi (G_1)$ contradicting $C$ being a direct summand in its
centraliser $C_{G_1}(C) = G_1$. Hence $d(U)>d(G)$.

1.3 Suppose that $C \subseteq U$, $G_2 U = G_1 U = G$. Then $U$ is
the fundamental group of the graph of groups $(\U,T/U)$  with two
vertex groups $U \cap G_1$ and $U \cap G_2$ and $p$ edges linking
this two vertices with edge groups $C, C^x, \ldots, C^{x^{p-1}}$,
where $x \in G \setminus U$.

If  at least one of $G_1$ and $G_2$ is non-abelian we can assume
(by symmetry) that $G_1$ is non-abelian. Then since $d(U \cap G_1)
\geq d(G_1) + 1$ and $d(U \cap G_2) \geq d(G_2)$
$$
d(U) \geq d(U \cap G_1) + d(U \cap G_2) + (p-1) - p \geq $$
$$d(G_1) + 1  + d(G_2) + (p-1) - p \geq d(G_1) + d(G_2) \geq d(G)
$$
If $d(U) = d(G)$ then $d(G) = d(G_1) + d(G_2)$ and $d(U \cap G_2)
= d(G_2)$,  hence by Lemma \ref{generation}  $C \subseteq
\Phi(G_1) \cap \Phi(G_2)$ and by induction hypothesis $G_2$ is abelian. But this
contradicts  the fact that
 $C$ is a direct summand of $G_2$. Thus $d(U)>d(G)$.

If both $G_1$ and $G_2$ are abelian then $C$ is trivial (since
$G\in {\mathcal L}$), hence
$$
d(U) \geq d(U \cap G_1) + d(U \cap G_2) + (p-1)  \geq d(G_1) +
d(G_2) + 1 > d(G) $$
 and this completes the case of $G$ splitting as an amalgamated free pro-$p$ product.

\medskip
2. Now assume that $G = HNN (G_1, C, t) = \langle G_1, t  | c^t =
c_1 \rangle$ i.e. $G$ is an HNN extension with a base group $G_1$
, stable letter $t$ and associated  procyclic subgroups $C = \langle
c \rangle$ and $C_1 = \langle c_1 \rangle$. If $C$ is trivial then
$G = G_1 \coprod \langle t \rangle$, hence we can apply the case
of amalgamated pro-$p$ products considered above. Thus we can
assume that $C$ is infinite.

If $G_1$ is abelian then $\chi(G)=\chi(G_1)-\chi(C) = 0$, so by
\cite[Thm.~3.4]{PavelIlir} $G$ is abelian, a contradiction. Thus
we can assume that $G_1$ is not abelian.

Let $T$ be the standard pro-$p$ tree on which $G$ acts i.e. $T/G$
has one edge (a loop), the edge stabilizers of the action of $G$
on $T$ are conjugates of  $C$ and  the vertex stabilizers of the
action of $G$ on $T$ are conjugates of  $G_1$. By restriction $U$
acts on $T$ and since $[G : U] < \infty$ we can decompose $U$ as
the fundamental group of a finite graph  of pro-$p$ groups
$(\U,T/U)$ with underlying graph $T / U$ (see Proposition 4.4 in
\cite{Z-M90} or subsection \ref{proptree} from the preliminaries). Then we have three cases.

2.1. Suppose that $U C = G$ , hence $G_1 U = G$. Then $U$ is the
fundamental group of the graph of groups $(\U,T/U)$ with one
vertex group $U \cap G_1$ and one edge group $U \cap C = C^p$. By
Lemma \ref{generation} $d(G_1) \leq d(G) \leq d(G_1) + 1$, where
$d(G) = d(G_1) + 1$ if and only if $c \Phi(G_1) = c_1 \Phi(G_1)$.
Similarly $d(G_1 \cap U) \leq d(U) \leq d(G_1 \cap U) + 1$. Since
$G_1$ is non-abelian by induction $d(G_1 \cap U) \geq d(G_1) + 1$.
Then
$$
d(U) \geq d(G_1 \cap U) \geq d(G_1) + 1 \geq d(G).
$$
If $d(U) = d(G)$ then $d(G_1) + 1 = d(G)$ and so by Lemma
\ref{generation} $ c \Phi(G_1) = c_1 \Phi(G_1)$. Also $d(U) =
d(G_1 \cap U) $ and $U \cap C = \langle c^p \rangle$, $U \cap C_1
= \langle c_1^p \rangle$,  hence by Lemma \ref{generation} we have
$c^p \Phi (G_1 \cap U) \not= c_1^p \Phi (G_1 \cap U)$.  This
contradicts Lemma \ref{X} for the group $K = G_1 / \Phi(U \cap
G_1)$. Thus $d(U)>d(G)$.

2.2. Suppose  $C \subseteq U$ and $G_1 U = G$. Then $U$ is the
fundamental group of a graph of groups $(\U,T/U)$ with one vertex
group $U \cap G_1$ and $p$ edge groups $C, C^x, \ldots,
C^{x^{p-1}}$, where $x \notin U$. Since $G_1$ is not abelian we
have $d(U \cap G_1) \geq d(G_1) + 1$, so by Lemma \ref{generation}
$$
d(U) \geq d(U \cap G_1) \geq d(G_1) + 1 \geq d(G).
$$
If $d(U) = d(G)$ then $d(G_1) + 1 = d(G)$ hence by Lemma
\ref{generation} $ c \Phi(G_1) = c_1 \Phi(G_1)$. As well $ d(U) =
d(U \cap G_1)$ then since  $U \cap C = \langle c \rangle$,
$U \cap C_1 = \langle c_1 \rangle$ we deduce by Lemma
\ref{generation} that $\{ (c^{-1} c_1)^{x^j} \Phi (U \cap G_1)
\}_{0 \leq j \leq p-1}$ are linearly independent in $U \cap G_1 /
\Phi(U \cap G_1)$, contradicting Lemma \ref{X}.

2.3. Suppose that $G_1 \subseteq U$. Then $U$ is the fundamental
group of the graph of groups $(\U,T/U)$ with  $p$ vertex groups
$G_1^{x^j} \cap U = G_1^{x^j}$ and $p$ edge groups $C^{x^j} \cap U
= C^{x^j}$ such that the underlying graph of groups is a circuit
of $p$ edges.  Then since $G_1$ is not abelian we have $d(G_1)
\geq 2$ and hence
$$
d(U) \geq  1 - p + \sum_{j=1}^p d(G_1^{x^j} \cap U) \geq 1 - p + p
d(G_1)  \geq d(G_1) + 1 \geq d(G).
$$
If $d(U) = d(G)$ then $p d(G_1) + 1 - p = d(G_1) + 1$, so $d(G_1)
= p / (p-1) \in \mathbb{Z}$, $p = 2$ and $d(G_1) = 2$. As well
$d(G_1) + 1 = d(G)$, so by Lemma \ref{generation} we have  $ c
\Phi(G_1) = c_1 \Phi(G_1)$.

Let $\bar{G}$ be the quotient group of $G$ modulo the normal
closure of $\Phi(G_1)$ in $G$. Then $\bar{G} = A \coprod_{\bar{C}}
(\bar{C} \times \langle \bar{t} \rangle)$, where $A = G_1 /
\Phi(G_1)$ and overlining stands for the image of a subgroup or an
element of $G$ in $\bar{G}$.

Assume first that $c \in \Phi(G_1)$. Then $\bar{G} = A \coprod
\langle \bar{t} \rangle$. Since $G_1 \subseteq U$ we deduce that
$A^{\bar{G}} \subseteq \bar{U}$, hence $\bar{U} = A \coprod
A^{\bar{t}} \coprod \langle \bar{t}^2 \rangle$. Thus $d(U) \geq
d(\bar{U}) = 2 d(A) + 1 = 2 d(G_1) + 1 = 5 > 3 =  d(G_1) + 1 =
d(G)$, a contradiction.

Now suppose that $c \notin \Phi(G_1)$. Define $K$ as the quotient
group of $\bar{G}$ by the central subgroup $\bar{C}$. Then $K = B
\coprod \langle f \rangle$, where $f$ is the image of $\bar{t}$ in
$K$ and $B = A / \bar{C}$. Since $B \subseteq W$, where $W$ is the
image of $\bar{U}$ in $K$, we get $ W = B \coprod B^f \coprod
\langle f^2 \rangle, $ so $d(W) = 2 d(B) + 1 = 3$. On other hand
since $\bar{C}$ is a direct factor in $\bar{G}$ we get that
$\bar{U} \simeq W \times \bar{C}$, hence $d(U) \geq d(\bar{U}) =
d(W) + 1 = 4 > 3 = d(G_1) +1 = d(G) = d(U)$, a contradiction.

\end{proof}

{\bf Remark.} We do not know whether Theorem \ref{growing} holds
in the abstract case.

\begin{cor}\label{product} Let $G \in \mathcal{L}$ be non-abelian. Let  $H$ be a
finitely generated of infinite index  and $C$ a procyclic subgroups
of $G$. Then $HC$ is not open in $G$.\end{cor}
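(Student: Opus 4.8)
The plan is to argue by contradiction and exploit Theorem~\ref{growing} (Theorem B), which tells us that passing to an index-$p$ open subgroup of a non-abelian $G \in \mathcal{L}$ strictly increases the minimal number of generators. Suppose, for contradiction, that $HC$ were open in $G$. Since $H$ has infinite index in $G$, the procyclic factor $C$ must be infinite (otherwise $HC$ is a finite union of cosets of $H$ and hence of infinite index), and $HC$ being open forces $[G : HC] < \infty$ while $[HC : H] = \infty$. The key structural observation I would extract is that $HC$, being an open (hence finitely generated) subgroup of $G \in \mathcal{L}$, again lies in $\mathcal{L}$, and by Theorem~\ref{cofiniteaction} it acts cofinitely on the pro-$p$ tree $\Gamma$ for $G$; so $HC$ inherits a graph-of-pro-$p$-groups decomposition with procyclic edge groups.

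First I would reduce to a clean generation inequality. Because $C$ is procyclic and central in $HC$ modulo $H$ (more precisely, $H$ is of infinite index with $HC/H$ procyclic), I expect $HC$ to split as a pro-$p$ HNN extension or ascending-type decomposition in which $H$ is a vertex group and $C$ supplies the stable-letter data. Concretely, the idea is to realize $HC$ as an increasing chain of open subgroups interpolating between $H$ and $HC$, each of index $p$ in the next, using the procyclicity of $C \simeq \mathbb{Z}_p$ to build the tower $H \leq H C^{p^{k}} \leq \cdots \leq H C^{p} \leq HC$. Each link $V \leq W$ with $[W : V] = p$ would then, by Theorem~\ref{growing} applied inside the non-abelian ambient group, satisfy $d(V) > d(W)$ provided the relevant subgroup is non-abelian. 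This should produce a strictly decreasing chain of values $d(\,\cdot\,)$ as we descend from $HC$ toward $H$, contradicting the finiteness of $d(H)$: if $d(HC) < \infty$ and each step down strictly decreases $d$, but the chain is infinite (since $[HC:H]=\infty$), we obtain an infinite strictly decreasing sequence of non-negative integers, which is impossible.

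The main obstacle I anticipate is controlling non-abelianness and the edge-group incidence conditions at every step of the tower, since Theorem~\ref{growing} requires the ambient group at each stage to be non-abelian and the strict inequality $d(V) > d(W)$ genuinely uses the class $\mathcal{L}$ structure rather than being automatic. In particular one must rule out the degenerate possibility that some intermediate $H C^{p^{k}}$ becomes abelian, which would halt the descent; here I would invoke that $H$ is non-abelian whenever it is of infinite index and properly contains enough of $G$'s free structure, or more carefully argue that an abelian $HC$ of finite index would force $G$ itself to be abelian via \cite[Prop.~3.4]{PavelIlir}, contradicting the hypothesis. The second delicate point is verifying that the tower $\{H C^{p^{k}}\}_k$ consists of genuinely distinct open subgroups with consecutive index $p$ and union (or closure) equal to $HC$; this rests on $C \simeq \mathbb{Z}_p$ having $\{C^{p^k}\}$ as a neighborhood basis of the identity, so that the reduction to repeated index-$p$ extensions is legitimate and Theorem~B can be applied iteratively.
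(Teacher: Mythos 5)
Your overall strategy is the right one and is essentially the paper's: reduce to the case where $HC$ is open (the paper replaces $G$ by $HC$), interpolate an infinite descending tower of open subgroups between $HC$ and $H$ with successive indices $p$ (normality is automatic for index $p$ in a pro-$p$ group), and play Theorem~\ref{growing} off against the special form of these subgroups. Your worry about intermediate subgroups becoming abelian is also easily dispatched as you suggest: an abelian open subgroup would have Euler characteristic $0$, whereas $\chi(U)=[G:U]\,\chi(G)\neq 0$ for non-abelian $G$ by \cite[Prop.~3.4]{PavelIlir}.

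However, the final step as written does not produce a contradiction, and the inequality that actually does is missing. Theorem~\ref{growing} gives $d(V)>d(W)$ when $V$ is open normal of index $p$ in $W$, i.e.\ $d$ strictly \emph{increases} as you descend the tower from $HC$ toward $H$; your conclusion that the values form ``a strictly decreasing chain \dots\ an infinite strictly decreasing sequence of non-negative integers'' reverses this, and an infinite strictly \emph{increasing} sequence of integers is not impossible by itself. What is needed --- and what the paper's one-line proof hinges on --- is the uniform upper bound: for every open $U$ with $H\leq U\leq HC$ one has $U=H(U\cap C)$ (given $u\in U$, write $u=hc$ with $h\in H\leq U$, so $c=h^{-1}u\in U\cap C$), and $U\cap C$ is procyclic, whence $d(U)\leq d(H)+1$. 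This observation also repairs the secondary gap that $HC^{p^k}$ need not literally be a subgroup: one should work with $\langle H, C^{p^k}\rangle$, or simply with an arbitrary descending chain of open subgroups containing $H$ with successive indices $p$, which exists and is infinite precisely because $[HC:H]=\infty$. With the bound $d(U)\leq d(H)+1$ in hand, the unbounded growth of $d$ along the tower forced by iterating Theorem~\ref{growing} gives the contradiction.
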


\begin{proof} Suppose $HC$ is open in $G$. Then by going down to a subgroup of finite index in $G$ we can assume that $H C = G$, hence for every open subgroup $U$
of $G$ that contains $H$
 we have $d(U) \leq d(H) + 1 $ contradicting Therem
 \ref{growing}.\end{proof}

By Theorem \ref{cofiniteaction} a pro-$p$ group $G$ from class
$\mathcal{L}$ acts cofinitely on a pro-$p$ tree $\Gamma$. If $U_i$ is a
chain of open subgroups of $G$ then every $U_i$ is the fundamental
group of a graph of pro-$p$ groups over $\Gamma_i=\Gamma /U_i$.

\begin{proposition} \label{rankgrows} Let $G \in \mathcal{L}$ be non-abelian and let $\Gamma$ be a pro-$p$ tree on which $G$ acts cofinitely with procyclic edge
stabilizers. Let $H$ be a finitely generated subgroup of $G$ such
that $H/\widetilde H$ is not abelian. Then for every sequence $\{ U_i \}_{i \geq 1}$
of  open subgroups of $G$ such that $U_{i+1}$ is a normal subgroup
of index $p$ in $U_i$, $U_0 = G$ and $H=\bigcap_i U_i$ such that
$d(U_i / \widetilde{U_i})=rank(\pi_1(\Gamma/U_i))$ tends to infinity
when $i$ tends to infinity.
\end{proposition}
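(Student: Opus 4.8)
The plan is to translate everything into the first Betti numbers of the finite quotient graphs and to track how these change along the chain. First I would recall that, since $U_i$ acts cofinitely on $\Gamma$, the subgroup $\widetilde{U_i}$ generated by the vertex stabilizers is a closed normal subgroup of $U_i$ with $F_i := U_i/\widetilde{U_i}$ free pro-$p$ acting freely on $\Gamma/\widetilde{U_i}$, and $r_i := d(F_i) = \mathrm{rank}(\pi_1(\Gamma/U_i))$ is the cycle rank (first Betti number) of the finite graph $\Gamma/U_i$. Since $U_{i+1}\le U_i$ we have $\widetilde{U_{i+1}}\le\widetilde{U_i}$, and the task reduces to showing $r_i\to\infty$.

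The engine is a dichotomy at each step, according to whether $\widetilde{U_i}\le U_{i+1}$. In the \emph{free} case $\widetilde{U_i}\le U_{i+1}$ one checks $(U_i)_v=(U_{i+1})_v$ for every vertex, hence $\widetilde{U_{i+1}}=\widetilde{U_i}$ and $F_{i+1}$ is an index-$p$ subgroup of the free pro-$p$ group $F_i$; by the pro-$p$ Nielsen--Schreier formula $r_{i+1}=p\,r_i-(p-1)$, which exceeds $r_i$ as soon as $r_i\ge 2$. In the \emph{ramified} case $\widetilde{U_i}\not\le U_{i+1}$ one has $\widetilde{U_i}U_{i+1}=U_i$ because the index is $p$; since $\widetilde{U_{i+1}}\le U_{i+1}\cap\widetilde{U_i}$ this yields a surjection $F_{i+1}=U_{i+1}/\widetilde{U_{i+1}}\twoheadrightarrow U_{i+1}/(U_{i+1}\cap\widetilde{U_i})\cong U_i/\widetilde{U_i}=F_i$, so $r_{i+1}\ge r_i$. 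Thus $(r_i)$ is non-decreasing, and it tends to infinity provided infinitely many steps are free (each such step adds at least $p-1$ once $r_i\ge 2$).

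To ignite the growth I would show $r_i\ge 2$ for large $i$, which is where the hypothesis on $H$ enters. Here I use that the operator $\widetilde{(\cdot)}$ is continuous along the chain, namely $\bigcap_i\widetilde{U_i}\cap H=\widetilde H$, the $H$-vertex-stabilizers being the limits of the $U_i$-vertex-stabilizers; granting this, $H/\widetilde H=\varprojlim_i H/(H\cap\widetilde{U_i})$ and each $H/(H\cap\widetilde{U_i})$ embeds into $F_i$. If we had $r_i\le 1$ for all $i$, every $F_i$ would be abelian, so $[H,H]\le\bigcap_i(H\cap\widetilde{U_i})=\widetilde H$, contradicting that $H/\widetilde H$ is non-abelian. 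Hence $r_i\ge 2$ eventually.

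It remains to exclude the one scenario compatible with monotonicity but not with growth: that $(r_i)$ is eventually constant, equal to some $r_\infty\ge 2$, with every step ramified. Iterating $U_i=\widetilde{U_i}U_{i+1}$ and using $\widetilde{U_{i+1}}\le\widetilde{U_i}$ gives $U_i=\widetilde{U_i}U_j$ for all $j\ge i$, and passing to the limit $U_i=\widetilde{U_i}H$ for all large $i$. The hard part is to contradict this. My plan is to use the cofinite graph-of-groups structure of $U_i$ (Theorem \ref{cofiniteaction}) together with the Riemann--Hurwitz count for the $\mathbb{Z}/p$-cover $\Gamma/U_{i+1}\to\Gamma/U_i$: writing $a_i,b_i$ for the numbers of ramified vertices and edges, constancy of $r_\infty$ forces $b_i-a_i=r_\infty-1\ge 1$, so at least $r_\infty$ procyclic edge stabilizers surject onto $U_i/U_{i+1}$ at every step. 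As $G$ has only finitely many edge-orbits, a pigeonhole argument produces one fixed procyclic edge group $C$ that ramifies infinitely often, and then, inducting on the height of $G$ through its vertex groups of smaller height (for which this statement and Theorem \ref{growing} may be assumed), I would argue that this persistent absorption makes $HC$ open, contradicting Corollary \ref{product}. This final passage, from persistent edge ramification with $U_i=\widetilde{U_i}H$ to the openness of $HC$, is the crux, and the place where the infinite index of $H$ and the finite height of $G$ are indispensable.
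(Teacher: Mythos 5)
Your reduction is sound as far as it goes: the dichotomy according to whether $\widetilde{U_i}\le U_{i+1}$, the Schreier computation in the free case, the surjection $F_{i+1}\twoheadrightarrow F_i$ in the ramified case, and the use of the non-abelian hypothesis on $H/\widetilde H$ to force $r_i\ge 2$ all match (indeed slightly refine) the paper's opening moves, which likewise observe via Schreier's formula that the ranks $d(U_i/\widetilde{U_i})$ are non-decreasing and that one may assume each $U_i/\widetilde{U_i}$ is non-procyclic. The gap is that the entire difficulty of the proposition is concentrated in the case you leave unexecuted: $r_i$ eventually constant equal to some $r_\infty\ge 2$. Your plan for that case --- a Riemann--Hurwitz count plus pigeonhole to find one edge class $C$ that ramifies infinitely often, and then the assertion that ``persistent absorption makes $HC$ open'' --- is a declaration of intent rather than an argument, and it is doubtful it can be completed in this form: the ramifying edge stabilizers at successive steps are conjugates $C^{g_i}$ for varying $g_i$, and knowing that at every step \emph{some} conjugate of $C$ surjects onto $U_i/U_{i+1}$ gives no control on the double coset space $H\backslash G/C$, which is what one needs to conclude that $HC$ is open. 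The paper invokes Corollary \ref{product} only in the degenerate sub-case where the quotient graphs stabilize ($\Gamma_{i+1}=\Gamma_i$ for all large $i$), because there $\Gamma/H$ is finite and $H\backslash G/C$ really is finite.

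For the remaining (main) case the paper argues quite differently. When $r_i$ is constant the induced map $\varphi_{i+1}^*:\pi_1(\Gamma_{i+1})\to\pi_1(\Gamma_i)$ is an isomorphism; splitting $V(\Gamma_i)=V_1\cup V_2$ into the vertices over which $U_i/U_{i+1}$ acts trivially, respectively freely, one shows $V_1\neq\emptyset$, contracts the components of the fixed subgraph $\Delta=V_1\cup E_1$, and an Euler-characteristic count forces the contracted graphs $M_i$, $M_{i+1}$ to be trees and $\Delta$ to be connected, so that all of $\pi_1(\Gamma_i)$ is carried by $\Delta$. Since $\varphi_{i+1}$ restricts to a bijection over $\Delta$ and $\Gamma=\varprojlim\Gamma_i$ is a pro-$p$ tree, $\pi_1(\Delta)$ cannot survive in the inverse limit, whence $\Gamma_i$ is simply connected, contradicting $r_i\ge 2$. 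Nothing in your proposal supplies this (or an equivalent) argument, so the proof is incomplete precisely at its central point.
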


\begin{proof}  Let  $\{ U_i \}_{i \geq 0}$ be a sequence
of  open subgroups of $G$ such that $U_{i+1}$ is a normal subgroup
of index $p$ in $U_i$, $U_0 = G$ and $H=\bigcap_i U_i$.
 Let $\Gamma_i = \Gamma / U_i$. Thus $U_i=\Pi_1(\U_i,\Gamma_i)$ is the fundamental group of a graph of groups with vertex and edge groups being certain  vertex and
edge stabilizers  of the action of $U_i$ on $\Gamma$, so edge groups
are procyclic.

Observe that $U_i / \widetilde{U}_i$ are free pro-$p$ groups of rank $d(\pi_1(\Gamma_i))$ and  $\Gamma_{i+1} / (U_i / U_{i+1}) = \Gamma_i$ hence
there is a natural projection map $$\varphi_{i+1}:\Gamma_{i+1} \to
\Gamma_i$$ that induces a homomorphism $\varphi_{i+1}^* : \pi_1(\Gamma_{i+1}) \to \pi_1(\Gamma_i)$.
Note that by Schreier's formula
$d (U_{i+1} /
\widetilde{U_{i+1}}) \geq d(\overline{U}_{i+1}) \geq d(U_i / \widetilde{U_i})$, where $\overline{U}_{i+1}$ is the image of the map $U_{i+1} /
\widetilde{U_{i+1}} \to U_{i} /
\widetilde{U_{i}}$ induced by the incluion of ${U}_{i+1}$ in ${U}_i$. Since $H/\widetilde H$ is the inverse limit of
$U_i/\widetilde{U_i}$ we may assume that $U_i / \widetilde{U_i}
\simeq \pi_1(\Gamma_i)$ is non-procyclic free pro-$p$ group, in particular, $\Gamma_i$
is not a tree.

Assume that
$d(U_i / \widetilde{U_i})=rank(\pi_1(\Gamma/U_i))$ does not tend to infinity when $i$ tends to infinity. Then for some large $i_0$, for all $i \geq
i_0$, the ranks of $U_i / \widetilde{U_i}$ are the same, in particular since $U_i / \widetilde{U}_i$ is not procyclic we have  $\overline{U}_{i+1} = U_i / \widetilde{U}_i$, so the map $\varphi_{i+1}^*$ is an epimorphism ( hence an isomorphism).   From now on consider only $i \geq i_0$.

Note that if for a sufficiently large $i$, say $i \geq i_1$, we
have that $\Gamma_i = \Gamma_{i+1}$ then $T/ H = \Gamma_{i_1}$ is
finite. Then the number of edges of $T/ H$ is finite, hence we
have finitely many double coset classes $H \setminus G / C$ where
$C$ is some edge stabilizer of the action of $H$ on $T$.
 This implies that $H C$ is open in $G$ contradicting  Corollary \ref{product}. Thus we can assume that $\Gamma_{i+1} \not= \Gamma_i$ for infinitely many  $i$.

The main ingredient  of the proof is a description of how
$\Gamma_{i+1}$ and $\Gamma_i$ relate to each other when $\Gamma_{i+1} \not= \Gamma_i$. We recall that $U_{i+1}$ is the fundamental group of a graph of group over the graph $\Gamma_{i+1}$ and $U_{i}$ is the fundamental group of a graph of groups over the graph $\Gamma_{i}$ and the decomposition of $U_{i+1}$ as a graph of groups is induced by the decomposition of $U_i$ as a graph of groups as explained in the first paragraph of subsection \ref{proptree}.

We split $V(\Gamma_i)$ as a disjoint union $V_1 \cup V_2, $ where $U_i/ U_{i+1}$ fixes every element of  $ \varphi_{i+1}^{-1} (V_1)$
and $U_i / U_{i+1}$ acts freely on $ \varphi_{i+1}^{-1} (V_2)$.
Then the preimage of $V_1$ in $\Gamma_{i+1}$ has the same cardinality as $V_1$; the preimage of $V_2$ in
$\Gamma_{i+1}$ has cardinality $p|V_2|$. It follows that
$V_1\neq \emptyset$ since otherwise $\varphi_{i+1}$ is a covering
and so  $\varphi_{i+1}^*$ is not surjective contradicting to the
above.

Now there are 3 types of edges in $\Gamma_i$: 1) those that have
their vertices in $V_1$, 2) those that have their vertices in
$V_2$ and 3) those that have one of the vertices in $V_1$ and the
other in $V_2$. We denote them by $E_1$, $E_2$ and $E_{12}$
respectively.  Note that  $U_i/U_{i+1}$ acts freely on the preimages of $E_2$ and $E_{12}$ so that they have cardinalities $p|E_2|$ and $p|E_{12}|$.  Observe that since $\varphi_{i+1}$ is injective the preimage under $\varphi_{i+1}$ of an edge $e$ of $E_1$ contains exactly one edge , otherwise we can have a non-contractable closed path (of two edges) in $E_{i+1}$ whose image in $\Gamma_i$ is $e \bar{e}$, contradicting injectivity of $\varphi_{i+1}^*$.

Let $M_j$ be the graph obtained from $\Gamma_j$ contracting the connected components of the subgraph $\Delta = V_1 \cup E_1$ to a point, where $j = i$ or $j = i+1$. Assume that $\Delta$ has $k$ points. Since the restriction of $\varphi_{i+1}$ on $\Delta$ is a bijection the map $\pi_1(M_{i+1}) \to \pi_1(M_i)$ induced by $\varphi_{i+1}$ is an isomorphism.  Then
$$| E_2| + | E_{12} |  - k - | V_2| +1 = |E(M_{i})| - (V(M_i)| + 1 =
 rank(\pi_1(M_{i})) = $$ $$ rank(\pi_1(M_{i+1})) = |E(M_{i+1})| - (V(M_{i+1})| + 1  =
p| E_2| + p| E_{12} | - k - p| V_2| +1$$
Hence $(p-1) | E_2| + (p-1)| E_{12} |   = (p-1) |V_2|$ and $ 0 \leq rank(\pi_1(M_{i})) =  | E_2| + | E_{12} |  - k - | V_2| +1 = 1 - k \leq 0$, so $M_i$ and $M_{i+1}$  are trees, $\Delta$ is connected.
Thus $\Gamma_i$ is obtained from $\Delta$ by  attaching several trees and $\Gamma_{i+1}$ is obtained from $\varphi_{i+1}^{-1}(\Delta)$ by  attaching several trees, the restriction of $\varphi_{i+1}$ to $\varphi_{i+1}^{-1}(\Delta)$ is a bijection. Since the inverse limit $\Gamma$ of $\Gamma_i$ is a pro-$p$ tree we get that the fundamental group of $\Delta$ does not survive in the inverse limit, thus $\Delta$ and hence $\Gamma_i$ are simply connected, a contradiction.

\end{proof}

\begin{cor} Let $G \in \mathcal{L}$ be non-abelian and let $\Gamma$ be a pro-$p$ tree on which $G$ acts cofinitely with procyclic edge stabilizers. Let $H$ be a finitely generated subgroup of $G$ such that $H/\widetilde{H}$ is non abelian. Let
$\{ V_i \}_{i \geq 1}$ be open subgroups of $G$ such that
$V_{i+1}\leq V_i$, $V_0 = G$ such that $H=\bigcap_i V_i$.  Then
$d(V_i / \widetilde{V_i})$ tends to infinity when $i$ tends to
infinity.
\end{cor}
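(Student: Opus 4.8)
The plan is to deduce the corollary from Proposition~\ref{rankgrows} by refining the arbitrary descending chain $\{V_i\}$ into one whose consecutive terms differ by a normal subgroup of index $p$, and then recovering the conclusion for $\{V_i\}$ as a subsequence.

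First I would interpolate index-$p$ steps between each pair $V_{i+1} \leq V_i$. As $V_{i+1}$ is open in $G$, its core $K$ in $V_i$ is open and normal in $V_i$, so $P := V_i/K$ is a finite $p$-group containing the subgroup $\bar{Q} := V_{i+1}/K$. Since finite $p$-groups are nilpotent, every proper subgroup is contained in a maximal subgroup and every maximal subgroup is normal of index $p$; iterating from $P$ downward, always choosing a maximal subgroup that contains $\bar{Q}$, produces a chain $P = M_0 \supsetneq M_1 \supsetneq \cdots \supsetneq M_m = \bar{Q}$ with each $M_{j+1}$ normal of index $p$ in $M_j$. This chain reaches $\bar{Q}$ exactly, because the index $[M_j : \bar{Q}]$ is divided by $p$ at every step and so eventually equals $1$. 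Pulling back along $V_i \to P$ gives $V_i = W_0 \supsetneq \cdots \supsetneq W_m = V_{i+1}$ with each $W_{j+1}$ normal of index $p$ in $W_j$. Concatenating these interpolations for all $i$, starting from $U_0 = V_0 = G$, yields one descending sequence $\{U_j\}_{j \geq 0}$ of open subgroups of $G$ with $U_0 = G$, each $U_{j+1}$ normal of index $p$ in $U_j$, and in which each $V_i$ appears, say $V_i = U_{j(i)}$.

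Next I would verify that $\{U_j\}$ meets the hypotheses of Proposition~\ref{rankgrows}. Every $U_j$ lies between two consecutive terms $V_{i+1}$ and $V_i$, so $V_{i+1} \subseteq U_j \subseteq V_i$; since $H \subseteq V_{i+1}$ this gives $H \subseteq U_j$ for all $j$, and conversely $\bigcap_j U_j \subseteq \bigcap_i U_{j(i)} = \bigcap_i V_i = H$. Hence $\bigcap_j U_j = H$, while $U_0 = G$ and the standing hypotheses on $G$, $\Gamma$ and $H$ (with $H/\widetilde{H}$ non-abelian) are unchanged. Proposition~\ref{rankgrows} therefore applies to $\{U_j\}$ and yields $d(U_j/\widetilde{U_j}) \to \infty$ as $j \to \infty$.

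Finally, because each step of $\{U_j\}$ has index $p$ we have $[G : U_j] = p^{j}$, hence $[G : V_i] = p^{j(i)}$; as $H$ has infinite index in $G$ (the case implicit in Proposition~\ref{rankgrows}, whose proof invokes Corollary~\ref{product}), the indices $[G : V_i]$ are unbounded and so $j(i) \to \infty$. Consequently $d(V_i/\widetilde{V_i}) = d(U_{j(i)}/\widetilde{U_{j(i)}})$ is a subsequence of a sequence tending to infinity and therefore itself tends to infinity. The delicate point, and the main obstacle, is the interpolation: an open subgroup $V_{i+1}$ need not be normal in $V_i$, yet it must be reached by a chain of index-$p$ normal inclusions, and it is precisely the nilpotency of the finite quotient $V_i/K$ that guarantees this and lets the general chain of the corollary be replaced by the special chain demanded by Proposition~\ref{rankgrows}.
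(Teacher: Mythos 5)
Your proof is correct and takes exactly the route of the paper: the paper's own proof is a one-line remark that the chain $\{V_i\}$ can be refined to a chain $\{U_j\}$ with consecutive terms normal of index $p$ and the same intersection, to which Proposition \ref{rankgrows} is then applied. Your interpolation through maximal subgroups of the finite $p$-group $V_i/K$ (and the observation that $j(i)\to\infty$, so the $V_i$ form a subsequence of the refined chain) simply supplies the details that the paper leaves implicit.
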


\begin{proof} We can refine the sequence $ \ldots V_{i+1} \leq V_i \leq \ldots \leq V_1 \leq G $ to get a sequence
$ \ldots U_{i+1} \leq U_i \leq \ldots \leq U_1 \leq G $ as in the
previous theorem and apply this theorem.
\end{proof}

\section{The core property}

 \begin{theorem} \label{freecore} Let $N \leq H \leq G$, where $G \in {\mathcal L}$ is not abelian, $H$ finitely generated and $N$ non-trivial normal in $G$.  Let $G$ act on a pro-$p$ tree $\Gamma$ with $\Gamma / G$ finite and procyclic edge stabilizers. Assume that $N$ acts freely on $\Gamma$. Then $[G : H] < \infty$.
\end{theorem}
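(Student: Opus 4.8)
The plan is to argue by contradiction: assume $[G:H]=\infty$, so that $H$ is not open in $G$, and to run an induction on the height $n$ of $G$. Using the given tree $\Gamma$ I would realise the top of the splitting as $G=G_1\amalg_C G_2$ or as a pro-$p$ HNN-extension over a procyclic edge group $C$, with vertex groups of strictly smaller height, to which the statement is available by induction. Restricting the action to $H$ and invoking the pro-$p$ Bass--Serre machinery of Subsection~\ref{proptree}, $H$ becomes the fundamental group of a graph of pro-$p$ groups $(\mathcal H,\Gamma/H)$ with procyclic edge groups $H\cap G_e^g$ and vertex groups $H\cap G_v^g$. The aim is then to show that $\Gamma/H$ is finite and that these vertex groups have finite index, which together force $[G:H]<\infty$.

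The whole leverage comes from $N$. Since $N\neq 1$ acts freely we have $N\cap G_v^g=1=N\cap G_e^g$ for all $v,e,g$, so $N$ is non-elliptic and cannot be conjugated into any vertex group; as $N\leq H$, the action of $H$ on $\Gamma$ has no global fixed point either, and the hyperbolic elements of $N$ map non-trivially into the free pro-$p$ quotient $H/\widetilde H$, where $\widetilde H$ is generated by the vertex stabilisers. The key structural tool to convert this into finiteness is Corollary~\ref{product}, which rests on Theorem~\ref{growing}: I would try to produce a procyclic edge stabiliser $C$ with $HC$ open, so that after a finite-index reduction $G=HC$, contradicting the monotonic growth of $d(\,\cdot\,)$ guaranteed by Theorem~\ref{growing}. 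The sub-case of trivial edge stabilisers is exactly where the free pro-$p$ Kurosh decomposition of Theorem~\ref{kurosh} applies and should be handled separately.

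The step I expect to be the main obstacle is controlling the vertex groups $H\cap G_v^g$. Precisely because the action of $N$ is free, $N$ meets every vertex group trivially, so the inductive hypothesis of the present theorem cannot be fed into a vertex group directly (there is no non-trivial normal subgroup of $G_v$ left inside it). The normality of $N$ in the \emph{whole} of $G$, rather than merely in $H$, must instead be transported through the global graph-of-groups structure. Concretely I would pass to a descending chain of open subgroups $V_i$ with $V_0=G$ and $\bigcap_i V_i=H$ and analyse the free pro-$p$ quotients $V_i/\widetilde{V_i}=\pi_1(\Gamma/V_i)$, into which $N$ (being normal in each $V_i$) maps as a non-trivial normal subgroup; Proposition~\ref{rankgrows} and the corollary following it govern how the ranks of these quotients behave along the chain, and $H/\widetilde H=\varprojlim V_i/\widetilde{V_i}$ ties the limiting behaviour back to the finite generation of $H$. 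Reconciling the vertex-group contributions with this global normality, in the absence of the geometric Bass--Serre arguments used in the abstract case of \cite{BHMS}, is the delicate heart of the proof.

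Assuming this control is achieved, the argument closes as follows: the non-ellipticity of $N$ together with Corollary~\ref{product} forces $\Gamma/H$ to be finite, while the transported normality together with the inductive hypothesis forces each vertex group $H\cap G_v^g$ to have finite index in $G_v^g$. These two facts combine to give $[G:H]<\infty$, contradicting the standing assumption and completing the induction on the height of $G$.
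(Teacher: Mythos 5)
You have located most of the right ingredients (the descending chain of open subgroups $V_i$ with $\bigcap_i V_i=H$, the free quotients $V_i/\widetilde{V_i}$, Proposition~\ref{rankgrows}, and the observation that the image of $N$ is normal in each $V_i/\widetilde{V_i}$), but the proof is not closed: the step you defer as ``the delicate heart'' is precisely the missing argument, and the assembly you sketch in its place would not work. The paper never shows that $\Gamma/H$ is finite or that the vertex groups $H\cap G_v^g$ have finite index --- indeed Corollary~\ref{product} points the other way: under the standing assumption $[G:H]=\infty$ it shows (inside the proof of Proposition~\ref{rankgrows}) that $HC$ is \emph{not} open for an edge stabilizer $C$, hence that $\Gamma/H$ is \emph{infinite}; it cannot be used to force $\Gamma/H$ to be finite. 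Likewise there is no mechanism by which ``transported normality'' gives the vertex groups finite index (as you yourself note, $N$ meets them trivially), and no induction on the height is needed in this theorem at all (that induction lives in Theorem~\ref{core111}, where the free-acting $N_0$ is produced).

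The actual contradiction is obtained entirely inside the free quotients. One first reduces to the case that $N\widetilde H/\widetilde H$ is non-abelian, using that $N$ is not procyclic (a consequence of $[G:H]=\infty$ via Theorem 6.5 of \cite{KZ}); this is needed even to invoke Proposition~\ref{rankgrows}, whose hypothesis is that $H/\widetilde H$ is non-abelian --- a point your sketch does not address. Since $H$ is finitely generated, $H_1(H/\widetilde H,\mathbb{F}_p)$ is finite, so for all large $i$ the map $H/\widetilde H\to U_i/\widetilde{U_i}$ is injective. Choosing $i_1$ with $N\not\subseteq\widetilde{U_{i_1}}$ and $\mathrm{rank}(U_{i_1}/\widetilde{U_{i_1}})>\mathrm{rank}(H/\widetilde H)$ (possible because the ranks tend to infinity while $\mathrm{rank}(H/\widetilde H)$ is fixed and finite), one obtains a non-trivial normal subgroup $N\widetilde{U_{i_1}}/\widetilde{U_{i_1}}$ of the free pro-$p$ group $U_{i_1}/\widetilde{U_{i_1}}$ contained in the finitely generated subgroup $H/\widetilde H$; hence $H/\widetilde H$ has finite index there, and Schreier's formula gives $d(H/\widetilde H)-1=[U_{i_1}/\widetilde{U_{i_1}}:H/\widetilde H]\,\bigl(d(U_{i_1}/\widetilde{U_{i_1}})-1\bigr)\ge d(U_{i_1}/\widetilde{U_{i_1}})-1$, contradicting the choice of $i_1$. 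This Schreier-formula punchline --- not any control of vertex groups or finiteness of $\Gamma/H$ --- is what your proposal is missing.
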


\begin{proof} Suppose that $[G : H] = \infty$. Then by Theorem 6.5 \cite{KZ} $N$ is not finitely generated, we will need only that it is not procyclic.

By Proposition \ref{rankgrows}  $H / \widetilde{H}$ is the inverse limit of
$U / \widetilde{U}$ where $U$ runs through  a set $\{ U_i \}_{i \geq 0}$
of  open subgroups of $G$ such that $U_{i+1}$ is a normal subgroup
of index $p$ in $U_i$, $U_0 = G$ and $H=\bigcap_i U_i$ such that
$d(U_i / \widetilde{U_i})=rank(\pi_1(\Gamma/U_i))$ tends to infinity
when $i$ tends to infinity. Then $H_1(H/ \widetilde{H}, \mathbb{F}_p)$ is
the inverse limit of $H_1(U_i/ \widetilde{U_i}, \mathbb{F}_p)$ and
$H_1(H/ \widetilde{H}, \mathbb{F}_p)$ is finite. So for some $U_{i_0}$
we have that the inclusion map $ H \to U_{i_0}$ induces an injective
map $H_1(H/ \widetilde{H}, \mathbb{F}_p) \to H_1(U_{i_0} /
\widetilde{U_{i_0}}, \mathbb{F}_p)$. Hence for every $H \leq U_i \leq
U_{i_0}$  the map  $H_1(H/ \widetilde{H}, \mathbb{F}_p)
\to H_1(U_i / \widetilde{U_i}, \mathbb{F}_p)$ is injective. Since $H /
\widetilde{H}$ and $U_i / \widetilde{U_i}$ are  free pro-$p$ we get
that the map $H / \widetilde{H} \to U_i / \widetilde{U_i}$ is
injective and so $\widetilde{U_i}\cap H=\widetilde H$. Since $N$ is
infinitely generated (hence is not procyclic)  and acts freely on $T$  by replacing $H$
with some of its open subgroup containing $N$ we may assume that
$N\widetilde H/ \widetilde H$ is not abelian. Indeed if for every
 open subgroup $H_i$ of $H$ that contains $N$ we have that $H_i /
\widetilde{H_i}$ is procyclic then $N = N / \widetilde{N}$ is inverse
limit of the procyclic groups $H_i / \widetilde{H_i}$, so is procyclic,
a contradiction. Thus we may assume that $N\widetilde H/\widetilde H$ is
not abelian and in particular $H/\widetilde H$ is not abelian.

Consider $N \leq
U_{i_1} \leq U_{i_0}$ such that  $rank(H /
\widetilde{H}) < rank (U_{i_1} / \widetilde{U_{i_1}})$ and $N \not\subseteq
\widetilde{U_{i_1}}$. This is possible since $1 = \widetilde{N} = \cap_{i \geq i_0}
\widetilde{U_i}$  and  $rank(U_i / \widetilde{U_i})$ tends to infinity. Consider the groups
$$
N \widetilde{U_{i_1}} / \widetilde{U_{i_1}} \leq H / \widetilde{H} \leq U_{i_1} /
\widetilde{U_{i_1}}.
$$
Since  $N \widetilde{U_{i_1}} / \widetilde{U_{i_1}} $ is non-trivial normal
subgroup of a free pro-$p$ group $U_{i_1} / \widetilde{U_{i_1}}$,  $H /
\widetilde{H}$ is a subgroup of finite index in  $ U_{i_1} /
\widetilde{U_{i_1}}$ and  by Euler characteristic formula (Schreier
formula) $d(H / \widetilde{H}) - 1 = [U_{i_1} / \widetilde{U_{i_1}} : H /
\widetilde{H}] (d(U_{i_1} / \widetilde{U_{i_1}}) - 1)$ a contradiction with
$rank(H / \widetilde{H}) < rank (U_{i_1} / \widetilde{U_{i_1}})$.
\end{proof}

\begin{theorem}  \label{core111} Let $N \leq H \leq G$, with   $G \in {\mathcal L}$ non-abelian, $H$ finitely generated and $N$ non-trivial normal in $G$.
Then $[G : H] < \infty$.
\end{theorem}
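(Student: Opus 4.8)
The plan is to deduce the statement from the free-action case already in hand (Theorem \ref{freecore}) by an induction on the height $n$ of $G$. By Theorem \ref{cofiniteaction} I write $G = \pi_1(\mathcal{G}, \Gamma/G)$ as the fundamental group of a finite graph of pro-$p$ groups acting on a pro-$p$ tree $\Gamma$ with procyclic or trivial edge stabilizers and with vertex groups $G_v \in \mathcal{L}$ of strictly smaller height. The base case $n=0$ is when $G$ is free pro-$p$ of rank $\geq 2$: then $G$ acts freely on a tree, so $N$ acts freely and Theorem \ref{freecore} gives $[G:H] < \infty$ at once. For the inductive step I assume the statement for every non-abelian group of $\mathcal{L}$ of height $< n$.

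The dichotomy driving the argument is whether or not $N$ acts freely on $\Gamma$. If $N$ acts freely, the hypotheses of Theorem \ref{freecore} hold verbatim and we conclude $[G:H] < \infty$. So the whole problem is the \emph{elliptic} case, in which $L := N \cap G_v \neq 1$ for some vertex $v$. Since $N \trianglelefteq G$, the subgroup $L$ is a non-trivial normal subgroup of $G_v$ and $N \cap G_v^g = L^g$ for all $g$; moreover $N$ is contained in no conjugate of a single vertex group, since such an $N$ would lie in the kernel of the $G$-action on $\Gamma$ arising from a proper amalgamated or HNN splitting. In the elliptic case I argue by contradiction: assuming $[G:H]=\infty$, Theorem~6.5 of \cite{KZ} shows $N$ is not finitely generated, hence not procyclic, a fact I will use in the descent.

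In the elliptic case the plan is to descend to the vertex group $G_v$, which has smaller height. Writing $H_v = H \cap G_v$, we have $L \leq H_v \leq G_v$ with $1 \neq L$ normal in $G_v$, so the inductive hypothesis would yield $[G_v : H_v] < \infty$ provided $G_v$ is non-abelian and $H_v$ is finitely generated. If $G_v$ happens to be abelian, I would instead exploit that the incident edge group $C \leq G_v$ is procyclic and self-centralized in the adjacent non-abelian vertex group: commuting a non-trivial element of $N \cap G_v$ with a hyperbolic element of $G$ (which exists because $G$ is non-abelian) produces a non-trivial element of $N$ lying in a non-abelian vertex group or acting hyperbolically, thereby returning us to a case already treated. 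Finally the inductive conclusion at $v$ must be propagated back across the finite graph $\Gamma/G$ to force $[G:H] < \infty$, contradicting our assumption, using Corollary \ref{product} to exclude the configuration in which a finitely generated $H$ of infinite index multiplies with a procyclic edge group to an open subgroup.

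I expect the elliptic case to be the genuine obstacle, and within it three points demand real work: first, the finite generation of $H_v = H \cap G_v$, which is not automatic for subgroups of finitely generated pro-$p$ groups and which I would extract from the graph-of-groups decomposition of the finitely generated group $H$ together with the normality of $N$; second, the abelian-vertex reduction sketched above; and third, the propagation step, which is delicate precisely because $H$, unlike $N$, is not normal in $G$, so finiteness of index in one vertex group does not transfer for free to its conjugates. The free-action input of Theorem \ref{freecore} and the rank-growth of Proposition \ref{rankgrows} are what make the base of this reduction work.
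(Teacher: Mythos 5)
Your proposal correctly disposes of the case where $N$ acts freely on the tree (that is exactly Theorem \ref{freecore}), but the elliptic case --- which you yourself identify as ``the genuine obstacle'' --- is left as a plan with three acknowledged holes, and those holes are real. (a) There is no argument that $H_v = H\cap G_v$ is finitely generated; for pro-$p$ groups finite generation does not pass to such intersections, and the graph-of-groups decomposition of $H$ over $\Gamma/H$ is in general infinite, so nothing can be ``extracted'' from it for free. (b) Even granting $[G_v : H\cap G_v]<\infty$ for one vertex $v$, no mechanism is given that forces $[G:H]<\infty$; since $H$ is not normal, finiteness of index in one vertex stabilizer says nothing about the conjugate vertex groups, and Corollary \ref{product} only excludes one very specific configuration ($HC$ open with $C$ procyclic). (c) The abelian-vertex reduction via ``commuting with a hyperbolic element'' is not an argument. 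So as written the inductive step does not close.

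The paper's proof takes a different and shorter route that avoids the elliptic case for $N$ entirely: it never descends to vertex groups. The key observation is that Theorem \ref{freecore} only needs \emph{some} non-trivial normal subgroup of $G$ contained in $H$ that acts freely on the tree, so it suffices to replace $N$ by a smaller one. Working with the standard tree $T$ for $G_n = G_{n-1}\coprod_C (C\times B)$, one uses minimality of the height to see $\langle B\rangle^{G_n}\cap G\neq 1$, the decomposition $\langle B\rangle^{G_n}=\coprod_{g\in G_{n-1}/C}B^g$ to see that the commutator subgroup $K$ of $\langle B\rangle^{G_n}$ acts freely on $T$, and then commutative transitivity together with the classification of abelian subgroups (Corollary 5.5 of \cite{KZ}) to rule out $N\cap K=1$: otherwise $N\times(K\cap G)$ would be a finitely generated abelian normal subgroup conjugate into $C\times B$, which is impossible. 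Setting $N_0=N\cap K\neq 1$ and applying Theorem \ref{freecore} to $N_0\leq H\leq G$ finishes the proof. This ``shrink $N$ until it acts freely'' step is the missing idea in your proposal; without it, or without a genuine resolution of (a)--(c), the argument is incomplete.
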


\begin{proof}
Let $n$ be the height of $G$ i.e. $n$ is the smallest number such
that $G$ is a closed pro-$p$ subgroup of $G_n = G_{n-1} \coprod_C
(C \times B)$, where $C \simeq \mathbb{Z}_p$ is self-centralized
in $G_{n-1}$ and $B$ is $\mathbb{Z}_p^k$ for some $k \geq 1$. We
induct on $n$.

Let $T$ be the standard pro-$p$ tree on which $G_n$ acts i.e.
$T/ G_n$ has one edge and two vertices, the edge stabilizers are
conjugates of $C$ and the vertex stabilizers are conjugates of
$G_{n-1}$ and $C \times B$.

\medskip
{\bf Claim.} {\it There is a non-trivial normal subgroup $N_0$ of
$G$ such that $N_0 \leq N$ and $N_0$ acts freely on $T$.}

\medskip
{\it Proof of claim.}

If $\langle B \rangle^{G_n} \cap G = 1$ then $G$ embeds into
$G_{n-1}\cong \overline{G_n} = G_n / \langle B \rangle^{G_n}$
contadicting minimality of $n$. So $\langle B \rangle^{G_n} \cap G
\not= 1$. By Theorem 2.7 in  \cite{KZ}
$$
\langle B \rangle^{G_n} = \coprod_{g \in G_{n-1} / C} B^g
$$
so the commutator subgroup $K$ of $\langle B \rangle^{G_n} $ acts
freely on $T$. Since $G\cap \langle B \rangle ^{G_n}$ is not abelian (by Theorem
6.5 in \cite{KZ}), $K\cap G\neq 1$. If $N \cap (K\cap G) = 1$ then
in $G$ they generate $N \times (K\cap G) $ and so by commutative
transitivity it is abelian of rank at least 2. Then by Corollary
5.5 in \cite{KZ}  $N \times (K\cap G)$ is conjugate in $G_n$ into
$C \times B$, therefore is a finitely generated abelian normal
subgroup of $G$. But this is impossible because for any $g\in G
\setminus  (C\times B)$ we have $N \times (K\cap G)\leq (C\times
B)\cap (C\times B)^g$ is contained in a conjugate of $C$.
 Hence $N \cap K \not= 1$. Then it suffices to set $N_0 = N \cap K$. This completes the proof of the claim.

\medskip
Let $\Gamma$ be the pro-$p$ tree of Theorem \ref{cofiniteaction}. Then a subgroup of $G$ acts freely on
$\Gamma$ if and only if it acts freely on $T$. Thus the claim and
Theorem \ref{freecore} complete the proof of the theorem.
\end{proof}

\section{Aproximating homologies}

\begin{theorem}\label{general} Let $G$ be a profinite (pro-$p$) group acting on a profinite (pro-$p$) tree $T$ such that $T/G$ is finite and all vertex and
edge stabilizers are of type $FP_{\infty}$. Let $M$ be  a finite pro-$p$ $\mathbb{F}_p[[G]]$-module.  Let $\{ U_i \}_{i \geq 1}$ be a sequence of open
subgroups of $G$  such that for all $i$ we have $U_{i+1}  \leq U_i$ and
$$
\lim_{\overrightarrow{~i~}}  \dim H_j(U_i\cap G_v^g, M) / [G_v^g
:(G_v^g\cap U_i)] = \rho(v,g),$$ $$\lim_{\overrightarrow{~i~}}
\dim H_{j-1}(U_i\cap G_e^g, M) / [G_e^g :(G_e\cap U_i)] =
\rho(e,g),
$$ where $\rho(v,g), \rho(e,g)$ are
continuous functions with domains $V(T) \times G$ and $E(T) \times
G$ respectively.
Then  $$
\sup_{\overrightarrow{~i~}} \dim H_j(U_i, M) / [G : U_i] \leq
\sum_{v\in V(T)/G} sup_{g\in G}(\rho(v,g))+\sum_{e\in
E(T)/G}sup_{g\in G}(\rho(e,g)).
$$ In particular, if $\rho(v,g)$ and $\rho(e,g)$ are  the zero maps, then $$
\lim_{\overrightarrow{~i~}}  \dim H_j(U_i, M) / [G : U_i] =0.$$
\end{theorem}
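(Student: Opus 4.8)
The plan is to reduce the computation of $H_j(U_i,M)$ to the homology of the vertex and edge stabilizers via the Mayer--Vietoris sequence attached to the action on $T$, and then to recognise the resulting averaged sums as integrals against the normalized Haar measure $\mu$ on $G$. First I would use the fundamental short exact sequence of the pro-$p$ tree recalled in Subsection~\ref{proptree}. Since $T/G$ is finite, the permutation modules decompose over orbit representatives, giving a short exact sequence of $\mathbb{F}_p[[G]]$-modules
$$
0\to\bigoplus_{e\in E(T)/G}\mathbb{F}_p[[G/G_e]]\to\bigoplus_{v\in V(T)/G}\mathbb{F}_p[[G/G_v]]\to\mathbb{F}_p\to 0 .
$$
Tensoring over $\mathbb{F}_p$ with $M$ (diagonal action) keeps it exact, and applying $H_\ast(U_i,-)$ together with the tensor identity $\mathbb{F}_p[[G/G_v]]\otimes M\cong\mathbb{F}_p[[G]]\otimes_{\mathbb{F}_p[[G_v]]}M$ and Shapiro's lemma yields a long exact sequence whose vertex and edge terms are exactly $\bigoplus_{g}H_\ast(G_v^g\cap U_i,M)$ and $\bigoplus_g H_\ast(G_e^g\cap U_i,M)$, the sums running over the double cosets $g\in G_v\backslash G/U_i$ (resp. $G_e\backslash G/U_i$) that index the vertices and edges of $T/U_i$. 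From the piece of this sequence around degree $j$ one reads off
$$
\dim H_j(U_i,M)\le\sum_{v\in V(T)/G}\sum_{g}\dim H_j(G_v^g\cap U_i,M)+\sum_{e\in E(T)/G}\sum_{g}\dim H_{j-1}(G_e^g\cap U_i,M).
$$

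Next I would divide by $[G:U_i]$ and interpret each block as an integral. For a fixed $v$ set $f_i(g)=\dim H_j(G_v^g\cap U_i,M)/[G_v^g:G_v^g\cap U_i]$; by conjugation invariance of homology this is constant on the double coset $G_vgU_i$, whose Haar measure is $\mu(G_vgU_i)=[G_v^g:G_v^g\cap U_i]/[G:U_i]$. Since the double cosets $G_vgU_i$ partition $G$, these measures sum to $1$ and
$$
\frac{1}{[G:U_i]}\sum_g\dim H_j(G_v^g\cap U_i,M)=\int_G f_i\,d\mu .
$$
By hypothesis $f_i(g)\to\rho(v,g)$ pointwise. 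Moreover, since $G_v$ is of type $FP_{\infty}$, a finitely generated projective resolution of $\mathbb{F}_p$ over $\mathbb{F}_p[[G_v]]$ restricts to one over $\mathbb{F}_p[[V]]$ for every open $V\le G_v$, with the degree-$j$ term a summand of $\mathbb{F}_p[[V]]^{\,r_j[G_v:V]}$; this gives $\dim H_j(V,M)\le r_j(\dim_{\mathbb{F}_p}M)[G_v:V]$, so the $f_i$ are uniformly bounded independently of $i$ and $g$. The bounded convergence theorem on the finite measure space $(G,\mu)$ then gives $\int_G f_i\,d\mu\to\int_G\rho(v,g)\,d\mu(g)\le\sup_{g}\rho(v,g)$, and likewise for the edge blocks. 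Taking $\limsup_i$ in the displayed inequality and summing over $v$ and $e$ yields the asserted bound; when all $\rho$ vanish both sides are squeezed to $0$ since the dimensions are non-negative.

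The main obstacle is the passage to the limit in the last step: the partition of $G$ into double cosets changes with $i$, so pointwise convergence of $f_i$ is not by itself enough to control $\int_G f_i\,d\mu$. The crux is therefore the uniform bound on $f_i$ coming from the $FP_{\infty}$ hypothesis, which is what legitimises the use of bounded convergence and the replacement of the variable double-coset sums by a single integral. Verifying the decomposition of the edge module $[[\mathbb{F}_p(E^{*}(T),{*})]]$ into $\bigoplus_e\mathbb{F}_p[[G/G_e]]$ (including the contribution of edges with trivial stabiliser) and the exactness of the tensor and Shapiro steps are routine, and I would only check them briefly.
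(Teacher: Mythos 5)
Your proposal is correct, and its first half coincides with the paper's: the paper likewise starts from the Mayer--Vietoris sequence for the action of $G$ on $T$ (which is exactly what your short-exact-sequence-plus-Shapiro derivation produces), obtains the same bound $\dim H_j(U_i,M)\le\sum_{v,g}\dim H_j(G_v^g\cap U_i,M)+\sum_{e,g}\dim H_{j-1}(G_e^g\cap U_i,M)$, and uses the same coset-counting identity $\sum_{g\in G_v\backslash G/U_i}[G_v^g:G_v^g\cap U_i]=[G:U_i]$, which is precisely your observation that the Haar measures of the double cosets sum to $1$. Where you diverge is the limit interchange. The paper asserts that, $G$ being compact, the convergence $\rho(i,v,g)\to\rho(v,g)$ is \emph{uniform} in $g$, and then bounds each summand by $(\epsilon+\rho(v,g))[G_v^g:U_i\cap G_v^g]$ for $i$ large; there the $FP_{\infty}$ hypothesis is used only to guarantee finite-dimensionality of the homology groups. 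You instead recast each block as $\int_G f_i\,d\mu$ and pass to the limit by bounded convergence, extracting from $FP_{\infty}$ the uniform bound $\dim H_j(V,M)\le r_j(\dim M)[G_v:V]$ on the integrands. Your route is, if anything, the more robust one: pointwise convergence of locally constant functions to a continuous limit on a compact space does not by itself yield uniform convergence, so the paper's compactness step tacitly needs exactly the kind of supplementary control (domination, or equicontinuity) that your $FP_{\infty}$ bound supplies; in exchange you pay by invoking a measure-theoretic convergence theorem where the paper's intended argument is a bare $\epsilon$-estimate. Your intermediate bound $\int_G\rho(v,\cdot)\,d\mu\le\sup_{g}\rho(v,g)$ is also marginally sharper than the stated conclusion.
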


\medskip \noindent
{\bf Remark.} If $U_i$ are normal then the functions $\rho(v,g)$
and $\rho(e,g)$ are constant on $g$.

\begin{proof}
Since $G_v$ and $G_e$ are all of type $FP_{\infty}$ all the groups
$H_j(U_i\cap G_v^g, M)$ and $H_j(U_i\cap G_e^g, M)$ are finite
dimensional over $\mathbb{F}_p$. Furthermore since $T/ G$ is
finite we deduce that $G$ is of type $FP_{\infty}$.

Consider the Mayer-Vietoris long exact sequence in homology for
the action of $G$ on $T$
$$
\ldots \to \bigoplus_{e \in E(T)/ G} \bigoplus_{g \in G_e
\setminus G / U_i}  H_j(U_i \cap G_e^g, M) \to  \bigoplus_{v \in
V(T)/ G} \bigoplus_{g \in G_v  \setminus G / U_i}  H_j(U_i \cap
G_v^g, M)$$ $$ \to H_j(U_i, M) \to \bigoplus_{e \in E(T)/
G}\bigoplus_{ g \in G_e  \setminus G / U_i}  H_{j-1}(U_i \cap
G_e^g, M) \to \ldots
$$ It follows that
\begin{equation} \label{dimdiseq} \dim H_j(U_i, M) \leq  \sum_{v \in V(T)/ G}\sum_{g \in G_v \ \setminus G / U_i}  \dim H_j(U_i \cap G_v^g, M) + $$ $$\sum_{e \in E(T)/ G}\sum_{ g \in G_e \ \setminus G / U_i}  \dim H_{j-1}(U_i \cap G_e^g, M) < \infty.
 \end{equation}
 By hypothesis
$$
\rho(i, v, g) = \dim H_j(U_i \cap G_v^g, M) / [G_v^g : U_i \cap
G_v^g]
$$
and
$$
\rho(i, e, g) = \dim H_{j-1}(U_i \cap G_e^g, M) / [G_e^g : U_i
\cap G_e^g]
$$
tend to $\rho(v,g)$ and $\rho(e,g)$ respectively as $i$ goes to
infinity. Since $G$ is  compact  for a fixed $v$ and $e$ the
sequences $\{ \rho(i,v,g ) \}_{i}$ and $ \{ \rho(i, e, g) \}_i$
tend respectively to $\rho(v,g)$ and   $\rho(e,g)$ {\bf
uniformly}. Hence for a fixed $\epsilon > 0$ there is $i_0$ such
that for $i \geq i_0$, $g \in G$ and $v \in V(T) / G, e \in E(T) /
G$ we have
\begin{equation} \label{diseq1}
H_j(U_i \cap G_v^g, M) \leq (\epsilon+\rho(v,g)) [G_v^g : U_i \cap
G_v^g]
\end{equation}
and
\begin{equation} \label{diseq2}
H_{j-1}(U_i \cap G_e^g, M) \leq (\epsilon+\rho(e,g)) [G_e^g : U_i
\cap G_e^g].
\end{equation}
Counting $G_v$ and $G_e$-orbits and the sizes of these orbits in
$G / U_i$ we have
\begin{equation} \label{cosetaction}
 \sum_{  g \in G_v
\setminus G / U_i}  [G_v^g : U_i \cap G_v^g] = [G : U_i] \hbox{
and } \sum_{  g \in G_e \setminus G / U_i}  [G_e^g : U_i \cap
G_e^g] = [G : U_i]
\end{equation}
Then by (\ref{dimdiseq}), (\ref{diseq1}), (\ref{diseq2}) and
(\ref{cosetaction}) we have
$$
\dim H_j(U_i, M) \leq \sum _{v \in V(T)/ G}\sum_{g \in G_v
\setminus G / U_i}  \dim H_j(U_i \cap G_v^g, M) + $$ $$\sum _{e
\in E(T)/ G}\sum_{g \in G_e  \setminus G / U_i}  \dim H_{j-1}(U_i
\cap G_e^g, M)\leq  \sum _{v \in V(T)/ G}\sum_{ g \in G_v
\setminus G / U_i}  (\epsilon+\rho(v,g)) [G_v^g : U_i \cap G_v^g]
+$$ $$ \sum _{e \in E(T)/ G}\sum_{ g \in G_e  \setminus G / U_i}
(\epsilon+\rho(e,g)) [G_e^g : U_i \cap G_e^g]\leq $$ $$ [G : U_i]
\big( \sum _{v \in V(T)/ G}(\epsilon+sup_g(\rho(v,g)))+\sum _{e
\in E(T)/ G} (\epsilon+ sup_g(\rho(e,g)))\big).
$$

Therefore for $i \geq i_0$
$$
\dim H_j(U_i, M) / [G : U_i] \leq \epsilon( | V(T/ G) | + |E(T/
G)| ) + $$ $$\sum_{v\in V(T)/G} sup_{g\in G}(\rho(v,g))+\sum_{e\in
E(T)/G}sup_{g\in G}(\rho(e,g)).$$ It follows that
$$\sup_{\overrightarrow{~i~}} \dim H_j(U_i, M) / [G : U_i] \leq
\sum_{v\in V(T)/G} sup_{g\in G}(\rho(v,g))+\sum_{e\in
E(T)/G}sup_{g\in G}(\rho(e,g)).
$$

\end{proof}

\begin{cor}\label{normal} Under the assumptions of Theorem \ref{general} with $j=1$ suppose further that $[G_e^g: (G_e^g\cap U_i)]$ tends to infinity for every $g \in G$ and  every fixed $e \in E(T)$ such that $G_e \not= 1$. Then if
$$
\lim_{\overrightarrow{~i~}} \dim H_1(U_i, M) / [G : U_i]$$ exists
we have
 $$
\lim_{\overrightarrow{~i~}} \dim H_1(U_i, M) / [G : U_i]
\leq\sum_{v\in V(T)/G} sup_{g\in G}(\rho(v,g)).$$\end{cor}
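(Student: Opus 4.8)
The plan is to deduce this directly from Theorem \ref{general} by showing that, for $j=1$, every edge contribution $\rho(e,g)$ vanishes, so that the bound produced there collapses to the vertex sum. Concretely, Theorem \ref{general} gives
$$
\sup_{\overrightarrow{~i~}} \dim H_1(U_i, M)/[G:U_i] \leq \sum_{v\in V(T)/G}\sup_{g\in G}\rho(v,g) + \sum_{e\in E(T)/G}\sup_{g\in G}\rho(e,g),
$$
and since the limit $\lim_{\overrightarrow{~i~}}\dim H_1(U_i,M)/[G:U_i]$ is assumed to exist it is bounded above by the left-hand side. Hence it suffices to prove that $\sup_{g\in G}\rho(e,g)=0$ for every edge orbit $e$.

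The key observation is that for $j=1$ the edge data is carried by $H_0$, and $H_0$ with a fixed finite coefficient module is uniformly bounded. Indeed, for a fixed edge $e$ with $G_e\neq 1$ and any $g\in G$ we have $H_0(U_i\cap G_e^g, M)=M_{U_i\cap G_e^g}$, the module of coinvariants, which is a quotient of $M$; therefore $\dim_{\mathbb{F}_p}H_0(U_i\cap G_e^g,M)\leq\dim_{\mathbb{F}_p}M$, a constant independent of $i$ and of $g$. By the extra hypothesis the index $[G_e^g:G_e^g\cap U_i]$ tends to infinity, so
$$
\rho(e,g)=\lim_{\overrightarrow{~i~}}\frac{\dim_{\mathbb{F}_p}H_0(U_i\cap G_e^g,M)}{[G_e^g:G_e^g\cap U_i]}=0
$$
for every such $g$. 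Since this vanishing is pointwise in $g$, one immediately gets $\sup_{g}\rho(e,g)=0$ with no need for a uniform-convergence argument, and the displayed bound reduces to $\sum_v\sup_g\rho(v,g)$, which is the assertion.

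I expect the only delicate point to be the edges carrying a trivial stabilizer. If $G_e=1$ then $G_e^g\cap U_i=1$, the index $[G_e^g:G_e^g\cap U_i]$ equals $1$ for all $i$, and $H_0(1,M)=M$, so that $\rho(e,g)=\dim_{\mathbb{F}_p}M\neq 0$; such an edge contributes a genuine term to $H_1$ (morally the free quotient $G/\widetilde G$ arising from the trivial splitting), and the vertex sum alone cannot bound the limit. Thus the argument is valid precisely when every edge of $T/G$ has nontrivial stabilizer, which is the situation in which the corollary is applied, since there the edge groups are procyclic and nontrivial; I would therefore read the hypothesis as ensuring that the restriction ``$G_e\neq 1$'' in fact ranges over all edges of $T/G$.
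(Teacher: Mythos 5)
Your proof is correct and is essentially the paper's own argument: the paper likewise observes that $H_0(U_i\cap G_e^g,M)=M/J(U_i\cap G_e^g)M$ is a quotient of $M$, hence of dimension at most $\dim M$, so that $\rho(e,g)=0$ once $[G_e^g:G_e^g\cap U_i]$ tends to infinity, and then invokes Theorem \ref{general}. Your caveat about edges with trivial stabilizer is well taken and is a point the paper's proof glosses over: the displayed computation there is applied to every edge even though the divergence of the index is only hypothesized for $G_e\neq 1$, and for a trivial edge stabilizer one gets $\rho(e,g)=\dim M$, so the stated bound can genuinely fail (e.g.\ a free pro-$p$ group of rank $2$ acting with trivial stabilizers has $\lim \dim H_1(U_i,\mathbb{F}_p)/[G:U_i]=1$ while the vertex sum is $0$); the corollary should indeed be read with the restriction that all edge stabilizers are non-trivial.
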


\begin{proof} Since $H_0(U,M)= M/J(U)M$, where $J(U)$ is the augmentation ideal of $\mathbb{Z}_p[[U]]$ and $U$ is a pro-$p$ group  we have
$$
\rho(e,g) = \lim_{\overrightarrow{~i~}} \dim  H_{0}(U_i\cap G_e^g,
M) / [G_e^g :(G_e^g \cap U_i)] = $$
$$\lim_{\overrightarrow{~i~}} \dim ( M / J(U_i\cap G_e^g) M )/
[G_e^g :(G_e^g \cap U_i)] \leq \lim_{\overrightarrow{~i~}} \dim M
/  [G_e^g :(G_e^g \cap U_i)] = 0.
$$
Thus we can apply Theorem \ref{general} to deduce the result.
\end{proof}
Recall that for a finitely presented pro-$p$ group $S$ the
deficiency $def(S) = \dim H_1(S, \mathbb{F}_p) - \dim H_2(S,
\mathbb{F}_p)$.

\begin{theorem}\label{>2} Let $G \in {\mathcal L}$ and $\{ U_i \}_{i \geq 1}$ be a sequence of open subgroups of $G$  such that $U_{i+1}  \leq U_i$ for all $i$ and
$cd(\cap_i U_i) \leq 2$. Then
\begin{itemize}
\item[(i)]
$$
\lim_{\overrightarrow{~i~}} \dim H_j(U_i, \mathbb{F}_p) / [G :
U_i] = 0 \hbox{ for }j \geq 3;$$ \item[(ii)]  if $[G : U_i]$ tends
to infinity we have
$$
\lim_{\overrightarrow{~i~}} def(U_i) / [G : U_i] = - \chi(G);$$
\item[(iii)] if $[G : U_i]$ tends to infinity and  $(\cap_i U_i) =
1$ we have
$$
\lim_{\overrightarrow{~i~}} \dim H_2(U_i, \mathbb{F}_p) / [G :
U_i] = 0 \hbox{ and } \lim_{\overrightarrow{~i~}}  \dim H_1(U_i,
\mathbb{F}_p) / [G : U_i] =  - \chi(G).$$\end{itemize}

\end{theorem}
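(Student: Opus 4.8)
The plan is to induct on the height $n$ of $G$, proving (i), (ii), (iii) together, with the abelian case $G \cong \mathbb{Z}_p^m$ treated directly as the foundation. Throughout I use that $G \in \mathcal{L}$ is of type $FP_\infty$ with $cd(G) < \infty$, so each $H_j(U_i,\mathbb{F}_p)$ is finite-dimensional, vanishes for $j > cd(G)$, and $\mathrm{def}(U_i)$ is defined. For abelian $G$ every open $U_i$ is again $\cong \mathbb{Z}_p^m$, so $\dim H_j(U_i,\mathbb{F}_p) = \binom{m}{j}$ is constant in $i$; the hypothesis $cd(\cap_i U_i) \leq 2$ forces $m \leq 2$ whenever the indices $[G:U_i]$ stay bounded, while otherwise $[G:U_i] \to \infty$, so in both cases $\dim H_j(U_i,\mathbb{F}_p)/[G:U_i] \to 0$ for $j \geq 3$ (and for $j=2$ once $\cap_i U_i = 1$). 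This settles the base of the induction, the free case $n=0$ being the subcase $m \le 1$.

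For non-abelian $G$ of height $n$, I would invoke Theorem \ref{cofiniteaction} to realize $G$ as $\pi_1(\mathcal{G},\Gamma/G)$ acting on a pro-$p$ tree $\Gamma$ with procyclic edge stabilizers and vertex stabilizers lying in $\mathcal{L}$ of height $< n$. For part (i) I apply Theorem \ref{general} with $M = \mathbb{F}_p$ and $j \geq 3$. The edge contributions $\rho(e,g)$ vanish because procyclic groups have $cd \leq 1$, so $H_{j-1}(U_i \cap G_e^g,\mathbb{F}_p) = 0$ for $j-1 \geq 2$; the vertex contributions $\rho(v,g)$ vanish by the inductive hypothesis (or the abelian computation above) applied to the chain $\{U_i \cap G_v^g\}_i$, whose intersection is a subgroup of $\cap_i U_i$ and hence has $cd \leq 2$. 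As all these limit functions are identically zero, hence continuous, Theorem \ref{general} yields $\lim_i \dim H_j(U_i,\mathbb{F}_p)/[G:U_i] = 0$.

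Part (ii) is then formal. Writing $\chi(U_i) = \sum_{j \geq 0}(-1)^j \dim H_j(U_i,\mathbb{F}_p)$, a finite sum since $cd(G) < \infty$, and using $H_0(U_i,\mathbb{F}_p) = \mathbb{F}_p$, one gets $\mathrm{def}(U_i) = 1 - \chi(U_i) + \sum_{j \geq 3}(-1)^j \dim H_j(U_i,\mathbb{F}_p)$. Dividing by $[G:U_i]$ and letting $i \to \infty$: the term $1/[G:U_i] \to 0$; the term $\chi(U_i)/[G:U_i] = \chi(G)$ is constant by multiplicativity of the Euler characteristic under finite-index inclusion; and the finite remaining sum tends to $0$ by part (i). Hence $\lim_i \mathrm{def}(U_i)/[G:U_i] = -\chi(G)$. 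For part (iii) the extra hypothesis $\cap_i U_i = 1$ lets me run Theorem \ref{general} with $j = 2$: the edge contributions $\rho(e,g)$ now measure the one-dimensional $H_1$ of the procyclic $U_i \cap G_e^g$, but $\cap_i U_i = 1$ forces $[G_e^g : U_i \cap G_e^g] \to \infty$ for every nontrivial edge stabilizer, exactly the hypothesis of Corollary \ref{normal}, so they vanish; the vertex contributions vanish as before. This gives $\lim_i \dim H_2(U_i,\mathbb{F}_p)/[G:U_i] = 0$, and combining with part (ii) via $\dim H_1(U_i,\mathbb{F}_p) = \mathrm{def}(U_i) + \dim H_2(U_i,\mathbb{F}_p)$ yields $\lim_i \dim H_1(U_i,\mathbb{F}_p)/[G:U_i] = -\chi(G)$.

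The step I expect to be the main obstacle is the vanishing of $\rho(v,g)$ for abelian vertex stabilizers when the local indices $[G_v^g : U_i \cap G_v^g]$ do \emph{not} tend to infinity: here $\dim H_j(U_i \cap G_v^g,\mathbb{F}_p)$ is a nonzero constant, and it is precisely the bound $cd(\cap_i U_i) \leq 2$, descended to the restricted chain, that rules this case out for $j \geq 3$. The supporting technical point to verify carefully is that the inductive hypothesis genuinely applies to the chains $\{U_i \cap G_v^g\}_i$, i.e. that each vertex stabilizer indeed lies in $\mathcal{L}$ of strictly smaller height and that the cohomological-dimension bound passes to it; everything else is bookkeeping with the Mayer--Vietoris estimate packaged in Theorem \ref{general}.
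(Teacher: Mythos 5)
Your proposal is correct and follows essentially the same route as the paper: induction on the height of $G$ with the free/abelian case as base, Theorem \ref{cofiniteaction} to get a cofinite action with procyclic edge stabilizers, Theorem \ref{general} applied with the vertex contributions killed by induction and the edge contributions killed by $cd \leq 1$ (for $j\geq 3$) or by $[G_e^g : U_i\cap G_e^g]\to\infty$ (for $j=2$ under $\cap_i U_i=1$), and the Euler-characteristic bookkeeping for (ii) and the $H_1$ statement. You even isolate the same delicate point the paper's base case addresses, namely that when the local indices at an abelian vertex stay bounded the hypothesis $cd(\cap_i U_i)\leq 2$ must be descended to the restricted chain.
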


\begin{proof} (i) We induct on the height of $G$. First if height of $G$ is 0, then $G$ is either free or abelian. If $G$ is free
$H_j(U_i, \mathbb{F}_p)  = 0$ for $j \geq 2$ and we are done. If
$G$ is abelian then $U_i \simeq G$ for every $i$, so $\dim
H_j(U_i, \mathbb{F}_p)  = \dim H_j(U_{i+1}, \mathbb{F}_p) $, hence
if $ \{ [G : U_i] \}_i$ tends to infinity we are done. If $ \{ [G
: U_i] \}_i$  does not tend to infinity then $U_i = U_{i+1} =
U_{i+2} = \ldots$, so $\cap_t U_t = U_i$ has finite index in $G$
and so $cd(G) = cd(\cap_j U_j) \leq 2$. Then $H_j(U_t,
\mathbb{F}_p) = 0$ for $j \geq 3$.

Assume that the theorem holds for groups from the class
$\mathcal{L}$ of smaller height. Let $n$ be the height of $G$ and
$G \subseteq G_n = G_{n-1} \coprod_{C_{n-1}} A_{n-1}$, where
$A_{n-1} = \mathbb{Z}_p^m = C_{n-1} \times B$. Then $G_n$ acts
cofinitely on a pro-$p$ tree $T$ with vertex stabilizers
conjugates of $G_{n-1}$ and $A_{n-1}$ and vertex stabilizers
conjugates of $C_{n-1}$. By Theorem \ref{cofiniteaction} $G$ acts
cofinitely on a pro-$p$ tree $\Gamma$ with vertex stabilizers that are
intersections of the vertex stabilizers of $G_n$ (in $T$)
with $G$ and edge stabilizers that are either trivial or infinite
cyclic.

Note that the height of $G_v$ is smaller than the height of $G$
since $G_v$ is inside of a conjugate of $G_{n-1}$ or a conjugate
of $A_{n-1}$. By induction applied for the group $G_v^g$ for a
fixed $j > 2$ we have
$$
\rho(i, v, g) = \dim H_j(U_i \cap G_v^g, \mathbb{F}_p) / [G_v^g :
U_i \cap G_v^g]
$$
tends to 0 as $i$ goes to infinity. Observe that since edge stabilizers are procyclic we have for any $j > 2$ that $H_{j-1}(U_i \cap G_e^g, \mathbb{F}_p)  = 0$, hence
$$
\rho(i, e, g) = \dim H_{j-1}(U_i \cap G_e^g, \mathbb{F}_p) / [G_e^g :
U_i \cap G_v^g] = 0.
$$ Then by Theorem \ref{general}
the result follows for $j>2$.

\smallskip
(ii) Since
$$
\chi(G) = \chi(U_i) / [G : U_i] = \sum_{j \geq 0} (-1)^j \dim
H_j(U_i, \mathbb{F}_p) / [G : U_i]$$ we have
$$
\chi(G) = \sum_{j \geq 3} \lim_{\overrightarrow{~i~}} \dim
H_j(U_i, \mathbb{F}_p) / [G : U_i]) - \lim_{\overrightarrow{~i~}}
def(U_i) / [G : U_i]  + \lim_{\overrightarrow{~i~}} 1/[G : U_i]$$
$$ = - \lim_{\overrightarrow{~i~}} def(U_i) / [G : U_i] $$

(iii)
 By induction applied for the group $G_v^g$
$$
\rho(v, g) = \lim_{\overrightarrow{~i~}}\dim H_2(U_i \cap G_v^g,
\mathbb{F}_p) / [G_v^g : U_i \cap G_v^g] = 0.
$$ Furthermore if $G_e \not= 1$
$$
0 \leq \rho(e, g) = \lim_{\overrightarrow{~i~}} \dim H_1(U_i \cap
G_e^g, \mathbb{F}_p) / [G_e^g : U_i \cap G_e^g] \leq
\lim_{\overrightarrow{~i~}} 1 / [G_e^g : U_i \cap G_e^g] = 0
$$
and if $G_e =1$ by the definition of $\rho(e,g)$ we have $\rho(e,g) = 0$.
 Then by Theorem \ref{general} $$
\lim_{\overrightarrow{~i~}}  \dim H_2(U_i, \mathbb{F}_p) / [G :
U_i] =0$$ and hence by (ii)
$$
\lim_{\overrightarrow{~i~}}  \dim H_1(U_i, \mathbb{F}_p) / [G :
U_i] = \lim_{\overrightarrow{~i~}}  \dim def(U_i) / [G : U_i] +
\lim_{\overrightarrow{~i~}}  \dim H_2(U_i, \mathbb{F}_p) / [G :
U_i]$$ $$ = - \chi(G) + 0 = - \chi(G).
 $$
\end{proof}

{\bf Remark.} The proof shows that (iii) also holds if we just
assume $[G_e^g:(U_i\cap G_e^g)]$ tends to infinity for every $e
\in E(T) / G$ such that $G_e \not= 1$ and for every $g\in G$.

\medskip
The Theorem \ref{>2} allows to obtain another proof of Theorem 6.5
in \cite{KZ} that we state as the following

\begin{cor} Let $H$ be a pro-p group from the class $\mathcal{L}$ with a non-trivial finitely generated
normal pro-$p$ subgroup $N$ of infinite index. Then $H$ is
abelian.\end{cor}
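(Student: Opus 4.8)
The plan is to argue by contradiction, assuming $H$ is non-abelian, and to compute the ``first mod-$p$ homology gradient'' $\lim_i \dim H_1(W_i,\mathbb{F}_p)/[H:W_i]$ in two incompatible ways: Theorem \ref{>2}(iii) evaluates it as $-\chi(H)$, whereas the presence of the finitely generated normal subgroup $N$ of infinite index forces it to vanish. By the result of Snopche--Zalesskii quoted in the introduction (Prop.~3.4 of \cite{PavelIlir}), a group in $\mathcal{L}$ has $\chi \le 0$, with equality exactly for abelian groups; so the assumption that $H$ is non-abelian gives $-\chi(H) > 0$, which is what I will contradict.

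For the first evaluation, since $H$ is an infinite finitely generated pro-$p$ group I would fix a descending chain $\{W_i\}$ of open normal subgroups of $H$ with $\bigcap_i W_i = 1$. Then $[H:W_i] \to \infty$ and $cd(\bigcap_i W_i) = 0 \le 2$, so Theorem \ref{>2}(iii) applies and yields
$$
\lim_{\overrightarrow{~i~}} \dim H_1(W_i,\mathbb{F}_p)/[H:W_i] = -\chi(H).
$$

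For the second evaluation I would bound $\dim H_1(W_i,\mathbb{F}_p) = d(W_i)$ using the normal subgroup $K_i := N \cap W_i \trianglelefteq W_i$. From the extension $1 \to K_i \to W_i \to NW_i/N \to 1$, where $W_i/K_i \cong NW_i/N \le \overline{H} := H/N$, the group $W_i$ is generated by lifts of generators of $NW_i/N$ together with generators of $K_i$, so $d(W_i) \le d(NW_i/N) + d(N\cap W_i)$. Both $N$ and $\overline{H}$ are finitely generated, so the pro-$p$ Schreier bound $d(V) \le 1 + [G:V](d(G)-1)$ (obtained by passing to a free pro-$p$ cover of $G$) keeps the ratios $d(NW_i/N)/[\overline{H}:NW_i/N]$ and $d(N\cap W_i)/[N:N\cap W_i]$ bounded by $d(\overline{H})$ and $d(N)$ respectively. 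Combining this with the factorization $[H:W_i] = [\overline{H}:NW_i/N]\cdot[N:N\cap W_i]$ gives
$$
\frac{\dim H_1(W_i,\mathbb{F}_p)}{[H:W_i]} \le \frac{d(\overline{H})}{[N:N\cap W_i]} + \frac{d(N)}{[\overline{H}:NW_i/N]}.
$$

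The crux is that both denominators tend to infinity: $N$ is infinite (groups in $\mathcal{L}$ are torsion-free, and $N \neq 1$), so the descending chain $\{N\cap W_i\}$ with trivial intersection must have $[N:N\cap W_i]\to\infty$; and $\overline{H}$ is infinite because $N$ has infinite index, while $NW_i/N \to 1$, so $[\overline{H}:NW_i/N]\to\infty$. Hence the right-hand side tends to $0$, forcing $-\chi(H)=0$, the desired contradiction, so $H$ is abelian. The only step requiring care is this double use of the hypotheses --- infiniteness of \emph{both} $N$ and $H/N$ drives the two denominators up while finite generation of both keeps the numerators linearly bounded; this is exactly the pro-$p$ analogue of the vanishing of the first $\ell^2$-Betti number in the presence of an infinite finitely generated normal subgroup of infinite index. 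The auxiliary fact that a descending chain of open subgroups of bounded index in an infinite profinite group must stabilize (contradicting trivial intersection) is what makes the two indices unbounded, and is routine.
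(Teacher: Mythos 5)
Your proof is correct and follows essentially the same route as the paper's first proof: both identify $\lim_i \dim H_1(W_i,\mathbb{F}_p)/[H:W_i]$ with $-\chi(H)$ via Theorem \ref{>2}(iii), show that this rank gradient must vanish because of the non-trivial finitely generated normal subgroup of infinite index, and conclude $\chi(H)=0$, hence $H$ abelian by the Snopche--Zalesskii characterisation. The only difference is that where the paper cites the pro-$p$ analogue of Proposition 13 of Abert--Jaikin-Zapirain--Nikolov for the vanishing, you prove it directly with the Schreier bound applied to $N\cap W_i\le N$ and $NW_i/N\le H/N$, which makes the argument self-contained.
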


\begin{proof} We give two new proofs one of which is an application of the results from this section.

1. By   Proposition  13 in
\cite{Nick} if $N$ is a non-trivial finitely generated normal
subgroup of a finitely generated residually finite group $G$ of
infinite index, then rank gradient of $G$ is zero.  The pro-$p$
version of it is also valid with changing all the groups in the
proof to pro-$p$ groups. Applying this for $G = H$  and by Theorem \ref{>2} the rank gradient for pro-$p$ groups is
$$\lim_{\overrightarrow{~i~}}  \dim H_1(U_i,
\mathbb{F}_p) / [H : U_i] = - \chi(H).$$ Then $\chi(H) = 0$ and by \cite{PavelIlir} $H$ is abelian.

2. Note that the corollary is a particular case of  Theorem \ref{core111}. The only place its proof relies on Theorem 6.5
in \cite{KZ} is the fact that in Theorem \ref{freecore} $N$ cannot be procyclic. But if $N$ was procyclic then by \cite{EHKZ} by substituting $H$ with a subgroup of finite index we can assume that $H/N$ has finite cohomological dimension, thus $\chi(H/N)$ is well defined. Then $\chi(H) =  \chi(N) \chi(H/N) = 0$ and we are done as in the previous proof.\end{proof}

\end{document}